\definecolor{linkblue}{RGB}{0,60,180}
\setlist{nosep}
\numberwithin{equation}{section}
\theoremstyle{plain}
\newtheorem{theorem}{Theorem}[section]
\newtheorem{lemma}[theorem]{Lemma}
\newtheorem{proposition}[theorem]{Proposition}
\newtheorem{corollary}[theorem]{Corollary}
\newtheorem{example}[theorem]{Example}
\theoremstyle{definition}
\newtheorem{definition}[theorem]{Definition}
\theoremstyle{remark}
\newtheorem{remark}[theorem]{Remark}
\newcommand{\Acal}{\mathcal{A}}
\newcommand{\ReffG}{R_{\mathrm{eff}}^{(G)}}
\newcommand{\ReffX}{R_{\mathrm{eff}}^{(X)}}
\newcommand{\ReffQd}{R_{\mathrm{eff}}^{(Q_d)}}
\newcommand{\iext}{i_{\mathrm{ext}}}
\Crefname{theorem}{Theorem}{Theorems}
\Crefname{lemma}{Lemma}{Lemmas}
\Crefname{proposition}{Proposition}{Propositions}
\Crefname{corollary}{Corollary}{Corollaries}
\Crefname{definition}{Definition}{Definitions}
\Crefname{remark}{Remark}{Remarks}
\Crefname{equation}{Equation}{Equations}
\Crefname{section}{Section}{Sections}
\title{Extensions of Real-Weighted Fractional Arboricity}
\author{Rowan Moxley\\[0.5em]
  Department of Mathematics and Statistics\\
  Eastern Michigan University\\
  \texttt{rmoxley@emich.edu}}
\date{December 9, 2025}
\begin{document}
%\linenumbers
\maketitle
\begin{abstract}
We study a conductance-weighted arboricity $\Acal_c(G)$ for a finite simple undirected graph
$G=(V,E,c)$ with a conductance assignment $c:E\to(0,\infty)$. This functional reduces to the fractional arboricity $\gamma(G)$ when $c\equiv 1$, is isomorphism invariant, monotone under taking subgraphs and adding edges, positively homogeneous, and convex. We also prove sharp global bounds with attainment at a connected subgraph. On the analytic side, we introduce a local variant and derive conductance--resistance inequalities using effective resistances in the ambient network,
which in turn yields an upper bound and hence an explicit effective resistance-based upper bound on $\Acal_c(G)$. On the structural side, we describe the algebraic behavior of $\Acal_c(G)$. We show that under
edge-disjoint unions of graphs, $\Acal_c(G)$ behaves as a max invariant: for a finite disjoint union of weighted
graphs one has $\Acal_c(G)= \max_i \Acal_{c_i}(G_i)$. In particular, disjoint union induces a commutative
idempotent monoid structure at the level of isomorphism classes, with $\Acal_c(G)$ idempotent with
respect to this operation. We also provide a computational exhibit on the hypercube family $Q_d$, including random conductance sampling, illustrating numerical evaluation of the resulting resistance-based bound.

\end{abstract}

\noindent\textbf{Keywords.} Weighted arboricity, conductance, effective resistance,
electrical networks, spanning trees, monoids.

\noindent\textbf{MSC (2020).} Primary 05C05, 05C42; Secondary 05C21, 05C22, 90C35.
\section{Introduction}
\label{sec:intro}
The arboricity of a graph $G$, defined as the minimum number of forests whose union covers
every edge of $G$, admits a classic optimization characterization due to Nash--Williams
\cite{nashwilliams1964}:
the arboricity is the ceiling of the maximum, over all connected subgraphs $H$ with $|V(H)|\ge 2$,
of $|E(H)|/(|V(H)|-1)$. The \textit{fractional arboricity}, denoted $\gamma(G)$, is the real-valued analogue resulting from the simple omission of the ceiling function from the classical case; this interpretation has proven to be useful in
structural and algorithmic graph theory
  \cite{gabow1992,picard1982,guruswami2003}.

Weighted analogues of arboricity have been studied in several directions. When edges carry
positive integer weights, Gabow and Westermann \cite{gabow1992} gave min-max theorems for the minimum number of forests needed to
cover each edge at least its weight. Real-weighted versions, especially with weights
interpreted as capacities or costs, appear in multicommodity flow and congestion minimization
\cite{chekuri2013}. However, the natural \emph{conductance-weighted}
analogue, where the density numerator is the sum of conductances (reciprocal resistances)
rather than cardinality or integer weights, has received surprisingly little direct attention. Many intended applications of this paper benefit from real-valued parameters (such as covariates in regressions), so we intentionally drop the ceiling in Nash–Williams’ definition and move from integer- to real-valued weights, using conductances due to their many useful qualities; these qualities are well-understood in the language of electrical networks, which has become a standard tool for studying
graph densities. Foster's theorem \cite{foster1949} and Rayleigh's monotonicity principle \cite{doylesnell1984}
imply that the effective resistance $R^{(G)}_{\text{ eff }(e)}$ between any two vertices is at most the
reciprocal of the number of edge-disjoint paths, and this perspective has been used to bound
spanning-tree densities and Laplacian matrix eigenvalues
  \cite{bollobas1998}. 

Despite the close formal analogy between
conductance-weighted spanning-tree quantities and effective resistance, few systematic studies relating arboricity to effective resistance exist within the literature. Most similar to this paper, effective resistances have been used to bound fractional arboricity in the unweighted setting, and in fact a bound which takes similar structure to our weighted arboricity functional is procured in Zhou et al. \cite{zhou2021edge} by using Rayleigh's monoticity principle. Our work diverges from Zhou et al.'s bound on fractional arboricity in that it studies a fundamentally different object, a real-weighted functional with exogenous edge weight assignments, and concerns itself on proving various properties about that functional rather than using electrical network theory 
to bound the unweighted fractional arboricity functional; in particular, we derive a local Foster inequality for arbitrary conductance assignments and a resistance bound, and we complement the theory with computational experiments under heterogeneous conductance sampling.

The purpose of this manuscript is to connect arboricity and electrical network theory due to their surprisingly numerous similarities. For a finite undirected graph $G$
equipped with nonnegative edge conductances $c:E\to(0,\infty)$, we define the
conductance-weighted arboricity
\[
\mathcal A_c(G) := \max\Bigl(
\{D_c(H): H\subseteq G,\ H \text{ connected},\ |V(H)|\ge 2\}
\cup \{0\}
\Bigr)
,
\qquad D_c(H):=\frac{\sum_{e\in E(H)}c(e)}{|V(H)|-1}.
\]
\begin{enumerate}
\item The functional $\Acal_c(G)$ inherits all expected analytic properties from the fractional
case: isomorphism invariance, monotonicity under edge addition and weight increase,
positive homogeneity, convexity, and sharp global bounds
$\max c(e)\le \Acal_c(G)\le\sum c(e)$, with attainment (Section~\ref{sec:basic}).
\item Every connected subgraph $H$ satisfies
$\sum_{e\in E(H)}c(e)R^{(G)}_{\text{ eff }(e)}\le |V(H)|-1$,
yielding an explicit resistance-based upper bound
on $\Acal_c(G)$ (Section~\ref{sec:local}). \medskip

\item Under edge-disjoint union, $\Acal_c(G)$ behaves like a max invariant:
$\Acal_c(G\sqcup H)=\max\{\Acal_c(G),\Acal_c(H)\}$. This induces a commutative idempotent monoid
structure on the set of conductance-weighted isomorphism classes
(Section~\ref{sec:algebra}). \medskip

\end{enumerate}

While the basic analytic properties in Section~\ref{sec:basic} follow the standard approach
for graph density and offer no surprises, the resistance inequality in
Section~\ref{sec:local} and especially the monoid observation in
Section~\ref{sec:algebra} appear to be new. In particular, we are unaware of any prior
work that records the idempotent monoid structure for arboricity (weighted or
unweighted) under disjoint union, despite its similarity to the well-known behavior
of girth, independence number, and odd-girth.

The rest of the paper is organized as follows:  Section~\ref{sec:basic} recalls definitions
and proves the basic properties. Section~\ref{sec:local} introduces a local variant
and derives the conductance--resistance inequalities. Section~\ref{sec:algebra} establishes
the monoid structure and Section~\ref{sec:computation} presents a small computational exhibit on families of hypercubes of the form $Q_d$.

\section{Weighted Arboricity: Definitions and Basic Properties}\label{sec:basic}
\subsection{Preliminaries and Notation}
Throughout, graphs are finite, simple, and undirected.  We write
$G=(V,E)$ with $n:=|V|$ and $m:=|E|$.  For $S\subseteq V$, $G[S]$
denotes the induced subgraph on $S$, and for $H\subseteq G$ we write
$V(H)$ and $E(H)$ for its vertex and edge sets.  A subgraph $H$ is
\emph{connected} if it has a single connected component. For weighted graphs we equip $G$ with an \emph{edge conductance}
function $c:E\to(0,\infty)$ and write $(G,c)$; all weights are assumed
nonnegative and are interpreted as conductances.  Definitions of the
weighted density and the conductance-weighted arboricity functional
$A_c(G)$ (and related notation such as $D_c(H)$) are given in the
following section and will be used consistently thereafter.

\subsection{Setup}

\begin{definition}[Weighted density and weighted arboricity]
For any connected subgraph $H\subseteq G$ with $|V(H)|\ge 2$, define the \emph{weighted density}
\[
D_c(H)\;:=\;\frac{\sum_{e\in E(H)} c(e)}{|V(H)|-1}.
\]
The \emph{weighted arboricity} of $(G,c)$ is
\[
\mathcal A_c(G)
:=
\max\Bigl(
\{D_c(H): H\subseteq G,\ H \text{ connected},\ |V(H)|\ge 2\}
\cup \{0\}
\Bigr).
\]
\end{definition}

\begin{remark}[Reduction from disconnected subgraphs]
If one allows disconnected $H$ and replaces the denominator by $|V(H)|-\omega(H)$, where $\omega(H)$ is the number of connected components of $H$, then the resulting maximization equals the one over connected $H$. Indeed, if $H=\bigsqcup_i H_i$ with each $H_i$ connected and $|V(H_i)|\ge 2$, then
\[
\frac{\sum_i \sum_{e\in E(H_i)} c(e)}{\sum_i \bigl(|V(H_i)|-1\bigr)}
\]
is a weighted average of $\{D_c(H_i)\}$ with weights $|V(H_i)|-1\ge 1$, and hence does not exceed $\max_i D_c(H_i)$. Therefore it suffices to optimize over connected subgraphs.
\end{remark}
\begin{remark}[Exclusion of zero edge weights]
    Readers may notice that $0$ was intentionally excluded from the definition of conductance assignment; this is because for the purposes of this research, such an edge weight will contribute nothing to the sum in the numerator of $D_c(G)$. Indeed, excluding $c=0$ edge weights may be interpreted as calculating maxima over connected components of the induced subgraph $G'=(V, \{e\in E(G): c(e)>0\})$, where $\Acal_c(G)=\Acal_c(G')$, so we are left without a sensible reason to include them in the definition of our conductance assignment.
\end{remark}
\subsection{Basic properties}

\begin{proposition}[Reduction to fractional arboricity]\label{prop:reduction}
If $c\equiv 1$ on $E(G)$, then
\[
\Acal_c(G)\;=\;\max_{H\subseteq G \text{ conn}} \frac{|E(H)|}{|V(H)|-1}=\gamma(G),
\]
the fractional arboricity. In particular, $\lceil \Acal_c(G)\rceil$ equals the classical (unweighted) Nash--Williams arboricity.
\end{proposition}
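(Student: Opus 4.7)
The proof is essentially a direct substitution followed by invocation of the Nash--Williams arboricity formula, so I will keep the plan brief.

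First I would specialize the defining formula for $D_c(H)$ to the constant weight case $c\equiv 1$. Since $\sum_{e\in E(H)} 1 = |E(H)|$, the weighted density collapses to
\[
D_c(H)=\frac{|E(H)|}{|V(H)|-1},
\]
which is precisely the (unweighted) density appearing in Nash--Williams. Taking the maximum over connected subgraphs $H\subseteq G$ with $|V(H)|\ge 2$ then matches, termwise, the definition of $\gamma(G)$ recalled in the introduction, giving $\Acal_c(G)=\gamma(G)$.

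Next, for the ceiling statement, I would cite the Nash--Williams theorem \cite{nashwilliams1964}, which asserts that the classical integer arboricity of $G$ equals
\[
\Bigl\lceil \max_{H\subseteq G\text{ conn}}\frac{|E(H)|}{|V(H)|-1}\Bigr\rceil = \lceil \gamma(G)\rceil.
\]
Combining this with the first part yields $\lceil \Acal_c(G)\rceil = \lceil \gamma(G)\rceil$ equals the Nash--Williams arboricity, as claimed.

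There is no genuine obstacle here; the only care required is a consistent convention on the admissible subgraphs (connected, at least two vertices), which is identical in both definitions, so the maxima are taken over the same family. The statement is really a compatibility check confirming that $\Acal_c(\cdot)$ extends $\gamma(\cdot)$, and it will serve as a baseline for the monotonicity and homogeneity results that follow.
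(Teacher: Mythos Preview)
Your proposal is correct and matches the paper's own proof, which simply observes that $c\equiv 1$ gives $\sum_{e\in E(H)}c(e)=|E(H)|$ and then takes maxima. If anything, your version is slightly more complete, since you explicitly invoke Nash--Williams for the ceiling clause, whereas the paper leaves that step implicit.
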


\begin{proof}
With $c\equiv 1$, we have $\sum_{e\in E(H)} c(e)=|E(H)|$, so $D_c(H)=|E(H)|/(|V(H)|-1)$. Taking maxima yields the claim.
\end{proof}

\begin{proposition}[Isomorphism invariance]\label{prop:iso}
If $\varphi:G\to G'$ is a graph isomorphism and $c'$ is transported by $c'(\varphi(e)):=c(e)$, then $\Acal_c(G)=\Acal_{c'}(G')$.
\end{proposition}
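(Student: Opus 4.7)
The plan is to use $\varphi$ to set up an explicit bijection between the feasible sets of the two optimization problems defining $\Acal_c(G)$ and $\Acal_{c'}(G')$, and then verify that the objective functional $D_c$ is preserved along this bijection; the statement then follows by taking maxima on both sides.

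First I would push $\varphi$ forward to subgraphs in the obvious way: for any $H\subseteq G$, set $\varphi(H):=(\varphi(V(H)),\{\varphi(e):e\in E(H)\})$. Because $\varphi$ is a graph isomorphism of finite simple undirected graphs, it restricts to a bijection on unordered vertex pairs that preserves adjacency, so $\varphi(H)$ is a well-defined subgraph of $G'$, the map $H\mapsto\varphi(H)$ is a bijection between subgraphs of $G$ and subgraphs of $G'$, and it preserves the cardinalities $|V(\cdot)|$, $|E(\cdot)|$, and connectedness. In particular it restricts to a bijection between the admissible subgraphs $\{H\subseteq G : H\text{ connected},\,|V(H)|\ge 2\}$ and $\{H'\subseteq G' : H'\text{ connected},\,|V(H')|\ge 2\}$.

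Next I would evaluate the objective on corresponding subgraphs. For any admissible $H\subseteq G$, the transport rule $c'(\varphi(e))=c(e)$ together with the edge bijection gives
\[
\sum_{e'\in E(\varphi(H))}c'(e')\;=\;\sum_{e\in E(H)}c'(\varphi(e))\;=\;\sum_{e\in E(H)}c(e),
\]
while $|V(\varphi(H))|-1=|V(H)|-1$. Dividing yields $D_{c'}(\varphi(H))=D_c(H)$. Thus the objectives on the two feasible sets are identified along the bijection, so their maxima coincide: $\Acal_c(G)=\Acal_{c'}(G')$.

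This is essentially a bookkeeping exercise and there is no real obstacle; the only care needed is to note that for simple undirected graphs an isomorphism is determined by a vertex bijection and induces an unambiguous bijection on (unordered) edges, which is exactly what makes both the pushforward $\varphi(H)$ and the transported weight $c'(\varphi(e))=c(e)$ well defined.
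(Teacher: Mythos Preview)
Your proof is correct and follows essentially the same approach as the paper: you set up the bijection between connected subgraphs of $G$ and $G'$ induced by $\varphi$, check that it preserves vertex counts and edge-weight sums (hence $D_c$), and conclude by taking maxima. The paper's proof is the same argument compressed to two sentences; your version simply spells out the pushforward and the edge-sum identity more carefully.
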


\begin{proof}
The map $\varphi$ bijects connected subgraphs, preserves vertex and edge counts, and thus preserves each $D_c(H)$. Taking maxima over corresponding subgraphs gives the result.
\end{proof}

\begin{proposition}[Monotonicity]\label{prop:mono}
If $G'$ is obtained from $G$ by adding edges (with nonnegative weights), then $\Acal_c(G')\ge \Acal_c(G)$. If $c'\ge c$ pointwise on $E(G)$, then $\Acal_{c'}(G)\ge \Acal_c(G)$.
\end{proposition}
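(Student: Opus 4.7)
The plan is to handle the two claims separately, since both reduce to simple monotonicity of the density functional $D_c$ evaluated on a fixed connected subgraph.

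For the edge-addition claim, I would first note that the feasible set in the definition of $\Acal_c$ only grows when we pass from $G$ to $G'$: every connected $H\subseteq G$ with $|V(H)|\ge 2$ is also a connected subgraph of $G'$ with the same vertex set and edge set. Moreover, the quantity $D_c(H)=\sum_{e\in E(H)}c(e)/(|V(H)|-1)$ depends only on the conductances assigned to edges of $H$, and these are unchanged when we add edges outside $H$. Hence the collection $\{D_c(H):H\subseteq G\text{ connected},|V(H)|\ge 2\}$ is a subset of the analogous collection for $G'$, and taking maxima gives $\Acal_c(G')\ge \Acal_c(G)$.

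For the weight-increase claim, the feasible set of connected subgraphs is identical for $G$ under either conductance assignment. For any connected $H\subseteq G$ with $|V(H)|\ge 2$, the hypothesis $c'\ge c$ pointwise on $E(G)$ gives
\[
D_{c'}(H)-D_c(H)\;=\;\frac{\sum_{e\in E(H)}\bigl(c'(e)-c(e)\bigr)}{|V(H)|-1}\;\ge\;0,
\]
so $D_{c'}(H)\ge D_c(H)$ for every feasible $H$. Taking maxima over the common feasible family yields $\Acal_{c'}(G)\ge\Acal_c(G)$.

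There is essentially no obstacle: both parts are direct consequences of the fact that a max increases when one enlarges the index set (part one) or increases the objective pointwise on a fixed index set (part two). The only minor bookkeeping point is that when extending $c$ to $G'$ in part one, one must give the new edges some nonnegative conductances, but since $D_c(H)$ for $H\subseteq G$ only reads the conductances of edges of $H$, the choice of extension is immaterial to the inequality.
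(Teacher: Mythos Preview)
Your proposal is correct and follows essentially the same approach as the paper: for edge addition you observe that the feasible family of connected subgraphs enlarges while $D_c$ on old subgraphs is unchanged, and for weight increase you note $D_{c'}(H)\ge D_c(H)$ termwise and take maxima. The only difference is that you spell out the bookkeeping about extending $c$ to the new edges, which the paper leaves implicit.
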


\begin{proof}
Adding edges enlarges the feasible family of $H$; $D_c$ on previously feasible $H$ is unchanged. For weight increases, for each fixed $H$, $\sum_{e\in E(H)} c'(e)\ge \sum_{e\in E(H)} c(e)$, hence $D_{c'}(H)\ge D_c(H)$. Taking maxima yields the claim.
\end{proof}

\begin{proposition}[Subgraph monotonicity]\label{prop:subgraph}
If $F\subseteq G$ is any subgraph (with conductances given by restriction of $c$), then $\Acal_c(F)\le \Acal_c(G)$.
\end{proposition}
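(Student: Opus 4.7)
The plan is to exploit the fact that subgraph monotonicity follows purely from the structure of the defining maximum: the feasible family for $\Acal_c(F)$ is contained in that of $\Acal_c(G)$, while the objective $D_c(\cdot)$ is evaluated identically in both. So the proof should be a two-line observation rather than a genuine calculation.

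Concretely, I would first let $H$ be any connected subgraph of $F$ with $|V(H)|\ge 2$. Since $F\subseteq G$, we automatically have $H\subseteq G$, and $H$ is still connected with $|V(H)|\ge 2$; therefore $H$ is admissible in the maximization defining $\Acal_c(G)$. Next, because the conductance on $F$ is by hypothesis the restriction of $c$ to $E(F)$, the quantity $\sum_{e\in E(H)} c(e)$ is unambiguous, and hence $D_c(H)$ is numerically the same whether we read $H$ as a subgraph of $F$ or of $G$. Consequently $D_c(H)\le \Acal_c(G)$.

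Finally, taking the supremum over all such $H\subseteq F$ on the left yields $\Acal_c(F)\le \Acal_c(G)$, which is the claim. Since the maximum in the definition of $\Acal_c$ is attained (the family of connected subgraphs is finite), no passage to a limit or compactness argument is needed. There is no real obstacle here; the only subtlety worth flagging, for consistency with \Cref{prop:mono}, is that this result is formally a special case of the edge-addition part of monotonicity, since $G$ can be viewed as obtained from $F$ by adding the edges in $E(G)\setminus E(F)$ (with their original weights). I would mention this parenthetically so the reader sees that \Cref{prop:subgraph} is redundant with \Cref{prop:mono} but is stated separately for ease of citation.
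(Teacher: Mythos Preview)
Your proof is correct and is essentially identical to the paper's: both observe that any connected $H\subseteq F$ is admissible in the maximization for $\Acal_c(G)$ with the same value of $D_c(H)$, and then take the maximum over $H$. One small caveat on your parenthetical remark: \Cref{prop:subgraph} is not literally a special case of the edge-addition clause in \Cref{prop:mono}, since a general subgraph $F\subseteq G$ may also have fewer vertices, not just fewer edges; if you want to keep the cross-reference, phrase it as ``closely related to'' rather than ``redundant with.''
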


\begin{proof}
We have two cases. If $|F|< 2$, then $\Acal_c(F)=0$ by definition, and we are done. Otherwise, every connected $H\subseteq F$ is a connected subgraph of $G$ with the same $D_c(H)$. Hence $D_c(H)\le \Acal_c(G)$ for all such $H$. Taking the maximum over $H\subseteq F$ gives $\Acal_c(F)\le \Acal_c(G)$.
\end{proof}

\begin{proposition}[Positive homogeneity and convexity]\label{prop:convex}
For any $\alpha\ge 0$, $\Acal_{\alpha c}(G)=\alpha\,\Acal_c(G)$. Moreover, for any conductance assignments $c^{(1)},c^{(2)}$ and $\lambda\in[0,1]$,
\[
\Acal_{\lambda c^{(1)}+(1-\lambda)c^{(2)}}(G)\ \le\ \lambda\,\Acal_{c^{(1)}}(G)+(1-\lambda)\,\Acal_{c^{(2)}}(G).
\]
\end{proposition}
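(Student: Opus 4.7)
The plan is to reduce both assertions to the observation that for each fixed connected subgraph $H\subseteq G$ with $|V(H)|\ge 2$, the map $c\mapsto D_c(H)=\frac{\sum_{e\in E(H)}c(e)}{|V(H)|-1}$ is a linear functional of the edge-conductance vector $c$, while the feasible family of subgraphs over which $\Acal_c(G)$ is maximized does not depend on $c$. Consequently $\Acal_c(G)$ is a pointwise maximum, over a fixed finite family of subgraphs, of linear functionals of $c$; both claims then reduce to the standard sublinearity properties of such a maximum.

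For positive homogeneity, I would fix $\alpha\ge 0$ and note that $D_{\alpha c}(H)=\alpha\,D_c(H)$ for every feasible $H$. Since a nonnegative scalar factors through the maximum, this yields $\Acal_{\alpha c}(G)=\alpha\,\Acal_c(G)$ at once, with the degenerate case $\alpha=0$ giving $0$ trivially (interpreting $c\equiv 0$ as in the earlier remark, or as a limit).

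For convexity, I would fix $\lambda\in[0,1]$ and use the same linearity in $c$ to write, for each feasible $H$,
\[
D_{\lambda c^{(1)}+(1-\lambda)c^{(2)}}(H)=\lambda\,D_{c^{(1)}}(H)+(1-\lambda)\,D_{c^{(2)}}(H)\le \lambda\,\Acal_{c^{(1)}}(G)+(1-\lambda)\,\Acal_{c^{(2)}}(G),
\]
where each term on the right is bounded by replacing $D_{c^{(i)}}(H)$ with the corresponding $\Acal_{c^{(i)}}(G)$. Taking the maximum of the left-hand side over the (fixed) feasible family yields the convexity inequality.

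There is no genuine obstacle: this is essentially the standard fact that a pointwise maximum of affine functions of a parameter is convex and positively homogeneous in that parameter. The only mild care needed is to verify that the maximization domain is independent of $c$, which holds because our convention $c>0$ on every edge makes every connected subgraph with at least two vertices feasible for every choice of conductance assignment.
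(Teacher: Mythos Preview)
Your proposal is correct and mirrors the paper's own proof: both rest on the linearity of $D_c(H)$ in $c$ for each fixed feasible $H$, together with the fact that the feasible family is independent of $c$, so that positive homogeneity and convexity of $\Acal_c$ follow from taking maxima. Your write-up is simply a more detailed version of the paper's one-line argument.
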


\begin{proof}
For fixed $H$, $D_{\alpha c}(H)=\alpha D_c(H)$ and $D_{\lambda c^{(1)}+(1-\lambda)c^{(2)}}(H)=\lambda D_{c^{(1)}}(H)+(1-\lambda)D_{c^{(2)}}(H)$. Taking maxima over $H$ yields the statements.
\end{proof}

\begin{proposition}[Global bounds and attainment]\label{prop:bounds}
Let $c_{\max}:=\max_{e\in E(G)} c(e)$, and $C(G):=\sum_{e\in E(G)} c(e)$.
The following hold:
\begin{enumerate}[label=(\roman*)]
\item $\Acal_c(G)\le C(G)$.
\item $\Acal_c(G)\ge c_{\max}$.
\item The maximum in the definition of $\Acal_c(G)$ is attained by some connected subgraph $H^\star\subseteq G$.
\end{enumerate}
\end{proposition}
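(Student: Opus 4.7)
The plan is to dispatch the three claims in turn, observing that each reduces to elementary bookkeeping once one notes that the feasible family is finite and that $|V(H)|-1 \ge 1$ whenever $|V(H)| \ge 2$.

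For (i), I would fix any feasible connected $H$ and bound the numerator and denominator of $D_c(H)$ separately: the sum $\sum_{e \in E(H)} c(e)$ is at most $C(G)$ because $E(H) \subseteq E(G)$ and all conductances are positive, while $|V(H)| - 1 \ge 1$. Dividing gives $D_c(H) \le C(G)$, and taking the maximum over $H$ finishes this bound. For (ii), I would exhibit a single witness. Let $e^\star \in E(G)$ be any edge achieving $c(e^\star) = c_{\max}$ and let $H^\star$ be the subgraph consisting of $e^\star$ together with its two endpoints. Then $H^\star$ is connected, $|V(H^\star)| = 2$, and $D_c(H^\star) = c_{\max}/1 = c_{\max}$, so $\Acal_c(G) \ge D_c(H^\star) = c_{\max}$.

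For (iii), the key observation is finiteness. The collection of subgraphs $H \subseteq G$ with $|V(H)| \ge 2$ is a subset of the power set of $V(G) \cup E(G)$, hence finite, and the connectedness constraint only shrinks it further. The functional $D_c$ assigns a real number to each such $H$, so the supremum defining $\Acal_c(G)$ is in fact a maximum over a finite nonempty set (nonempty by the witness from (ii)), and is therefore attained by some $H^\star \subseteq G$.

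There is no serious obstacle here; the only point that requires even mild care is making sure the feasible family in (iii) is nonempty, which is why I would present the single-edge witness from (ii) first so it can double as the nonemptiness certificate. If I wished to be pedantic, I would also note that (i) and (ii) together imply $c_{\max} \le C(G)$, which is of course obvious directly but serves as a consistency check.
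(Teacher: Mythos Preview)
Your proof is correct, and parts (i) and (iii) match the paper's argument essentially word for word. The only divergence is in (ii): you exhibit the single edge $e^\star$ of maximum conductance as a two-vertex witness, giving $D_c(H^\star)=c_{\max}$ directly. The paper instead invokes a spanning tree $T$ of $G$ and bounds $\Acal_c(G)\ge \bigl(\sum_{e\in E(T)}c(e)\bigr)/(|V(G)|-1)$, which is the \emph{average} conductance along $T$; this only yields a lower bound of $\min_e c(e)$, not $c_{\max}$, so as written the paper's argument does not actually establish the claim in (ii). Your single-edge witness is both simpler and the right argument for the stated bound; it is also the argument the paper itself uses later in Proposition~\ref{prop:local-lower} and in Corollary~\ref{cor:trees}.
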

\begin{proof}
(i) For any feasible $H$, $|V(H)|-1\ge 1$ and $\sum_{e\in E(H)} c(e)\le C(G)$, so $D_c(H)\le C(G)$.\\
(ii) For any spanning tree $T$ of $G$, if $c_{\max}\in T$, then $\Acal_c(G)\ge \Acal_c(T)= c_{\max}$; if $(u,v)=c_{\max}\notin T$, set $H=(u,v)$, then $\Acal_c(G)\ge c_{\max}$ certainly. In either case, the maximum conductance value cannot exceed that of the total fractional arboricity of the graph.\\
(iii) There are finitely many connected subgraphs of $G$; thus the set of densities $\{D_c(H)\}$ contains a maximum.
\end{proof}
\begin{corollary}[Weighted arboricity of trees]\label{cor:trees}
    Let T be a tree with a conductance assignment. Then $\Acal_c(G)=\max_{e\in E(T)}c(e).$
\end{corollary}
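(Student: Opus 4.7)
The plan is to establish matching lower and upper bounds using the fact that subgraphs of a tree are especially well behaved: every connected subgraph $H \subseteq T$ with $|V(H)| \geq 2$ is itself a subtree, so the combinatorial identity $|E(H)| = |V(H)| - 1$ applies on the nose.

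First I would dispatch the lower bound. Any single edge $e=\{u,v\}\in E(T)$ is a connected subgraph with $|V|=2$, so $D_c(\{e\})=c(e)$, and taking the maximum over single edges already yields $\Acal_c(T)\ge \max_{e\in E(T)}c(e)$. (One could equivalently cite \Cref{prop:bounds}(ii) via the spanning tree of $T$ itself, but the single-edge observation is cleaner.)

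For the upper bound, fix any feasible connected $H\subseteq T$. Since $T$ is acyclic, $H$ is also acyclic and connected, hence a tree, which forces $|E(H)|=|V(H)|-1$. Substituting into the definition of $D_c$ gives
\[
D_c(H)=\frac{\sum_{e\in E(H)}c(e)}{|V(H)|-1}=\frac{1}{|E(H)|}\sum_{e\in E(H)}c(e),
\]
so $D_c(H)$ is the arithmetic mean of the conductances on $E(H)$. The mean of a finite multiset of reals is bounded above by its maximum, so $D_c(H)\le \max_{e\in E(H)}c(e)\le \max_{e\in E(T)}c(e)$. Taking the maximum over $H$ yields the reverse inequality, and combining the two bounds gives the equality.

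The entire argument hinges on the single structural observation that connected subgraphs of a tree are trees; there is no real obstacle, which is why the statement is recorded as a corollary rather than a theorem.
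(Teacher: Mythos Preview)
Your proof is correct and follows essentially the same approach as the paper: observe that every connected subgraph of a tree is itself a tree, so $D_c(H)$ is the arithmetic mean of the edge conductances on $H$, which is maximized by a single edge of largest weight. You simply spell out the lower and upper bounds more explicitly than the paper's one-line version.
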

\begin{proof}
    Since every connected subgraph of a tree is a subtree, the density \(D_c(H)\) is an average of edge conductances over \(H\), hence maximized by a single edge of largest conductance; therefore \(\Acal_c(T)=\max_{e\in E(T)}c(e)\).
\end{proof}
% ============================================
% Two weighted trees (|V|=12 each) + Ac values
% ============================================
\begin{figure}[H]
\centering
\begin{tabular}{@{}c c@{}}

% ===================== (b) Tree 2 =====================
\begin{minipage}{0.48\textwidth}
\centering
\begin{tabular}{@{}c c@{}}
% --- graph ---
\begin{tikzpicture}[
  scale=0.95,
  vtx/.style={circle, draw=black, fill=white, inner sep=1.3pt, line width=0.9pt},
  edg/.style={draw=black, line width=0.9pt},
  lab/.style={midway, fill=white, inner sep=1pt, font=\scriptsize}
]
% vertices (12 total)
\node[vtx] (p) at (0,0) {};
\node[vtx] (q) at (1.2,0.8) {};
\node[vtx] (r) at (1.2,-0.8) {};

\node[vtx] (s) at (2.5,1.2) {};
\node[vtx] (t) at (2.5,0.4) {};

\node[vtx] (u) at (2.5,-0.4) {};
\node[vtx] (v) at (2.5,-1.2) {};
\node[vtx] (w) at (2.5,-2.0) {};

\node[vtx] (x) at (3.7,-1.6) {};
\node[vtx] (y) at (3.7,-2.4) {};
\node[vtx] (z) at (4.9,-2.0) {};
\node[vtx] (m) at (4.9,-2.8) {};

% edges (11 total) with conductance labels
\draw[edg] (p)--node[lab,above]{$2$}(q);
\draw[edg] (p)--node[lab,below]{$1$}(r);

\draw[edg] (q)--node[lab,above]{$3$}(s);
\draw[edg] (q)--node[lab,below]{$\tfrac{3}{2}$}(t);

\draw[edg] (r)--node[lab,above]{$\tfrac{7}{10}$}(u);
\draw[edg] (r)--node[lab,above, yshift=-5pt]{$\tfrac{11}{5}$}(v);
\draw[edg] (r)--node[lab,left]{$5$}(w);

\draw[edg] (w)--node[lab,above]{$\tfrac{11}{10}$}(x);
\draw[edg] (w)--node[lab,below]{$\tfrac{14}{5}$}(y);

\draw[edg] (y)--node[lab,above]{$\tfrac{19}{10}$}(z);
\draw[edg] (y)--node[lab,below]{$\tfrac{21}{5}$}(m);
\end{tikzpicture}
&
% --- equation ---
\(\displaystyle
\Acal_c(T_2)=\max_{e\in E(T_2)} c(e)=5.
\)
\end{tabular}

\small (b) Weighted tree \(T_2\) (\(|V|=12\))
\end{minipage}

\end{tabular}

\caption{A weighted tree with edge conductances labeled.}
\label{fig:two_weighted_trees}
\end{figure}
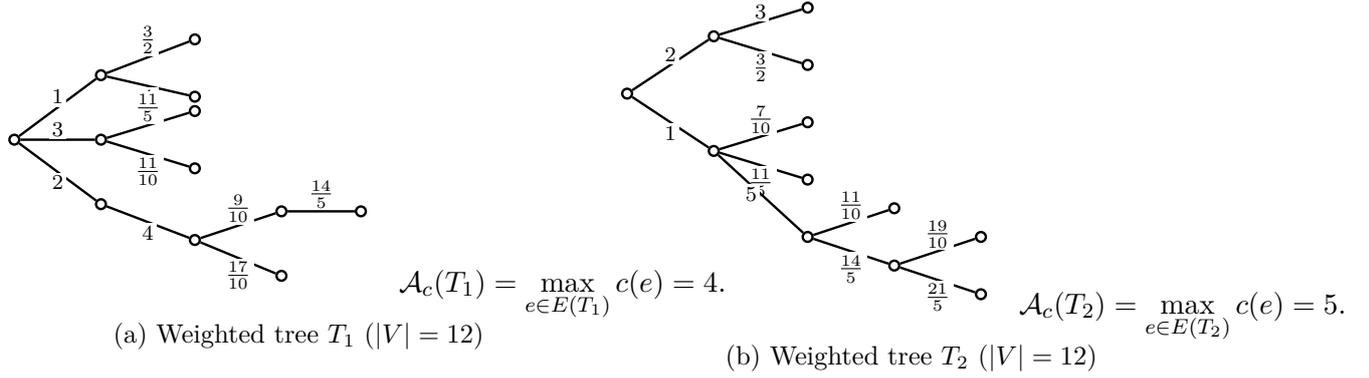

\begin{example}[Classical families, unit weights]\label{prop:examples}
If $c\equiv 1$ then:
\begin{enumerate}[label=(\alph*)]
\item If $G$ is a tree, then $\Acal_c(G)=1$.
\item For a cycle $C_n$ ($n\ge 3$), $\Acal_c(C_n)=\dfrac{n}{\,n-1\,}$.
\item For the complete graph $K_n$ ($n\ge 2$), $\Acal_c(K_n)=\dfrac{n}{2}$.
\end{enumerate}
\end{example}

% ===================== (c) K_7 (unit conductances) =====================
\begin{figure}[H]

\centering
\begin{tikzpicture}[
  scale=0.95,
  vtx/.style={circle, draw=black, fill=white, inner sep=1.3pt, line width=0.9pt},
  edg/.style={draw=black, line width=0.7pt}
]
% K_7 on a circle
\foreach \i in {1,...,7}{
  \node[vtx] (u\i) at ({360/7*\i}:1.35) {};
}
\foreach \i in {1,...,7}{
  \foreach \j in {\i,...,7}{
    \ifnum\i<\j
      \draw[edg] (u\i)--(u\j);
    \fi
  }
}

\end{tikzpicture}

\vspace{2pt}
\[
c\equiv 1,\qquad
|V(K_7)|=7,\qquad |E(K_7)|=\binom{7}{2}=21,\qquad
D_1(K_7)=\frac{|E|}{|V|-1}=\frac{21}{6}=\frac{7}{2},
\qquad
\Acal_1(K_7)=\frac{7}{2}.
\]

\caption{Complete graph $K_7$ (unit conductances)}
\label{figk7}
\end{figure}
The evaluations from Example \ref{prop:examples} are well-known and follow immediately from the standard fractional arboricity characterization, that which omits the ceiling function from the classical Nash--Williams identity in \cite{nashwilliams1964}.

\section{Local Variants and Local Foster Constraints}\label{sec:local}

\subsection{Local weighted arboricity}

\begin{definition}[Local weighted arboricity]
For $v\in V(G)$, define
\[
\Acal^{\mathrm{loc}}_c(v)\ :=\ \max\Bigl\{ D_c(H) : H\subseteq G\ \text{connected},\ v\in V(H),\ |V(H)|\ge 2\Bigr\}.
\]
\end{definition}

\begin{proposition}[Local--global equivalence]\label{prop:local-global}
With $\Acal_c(G)$ as above,
\[
\Acal_c(G)\;=\;\max_{v\in V(G)} \Acal^{\mathrm{loc}}_c(v).
\]
\end{proposition}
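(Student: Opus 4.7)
The plan is to prove both inequalities by exploiting the fact that $\Acal^{\mathrm{loc}}_c(v)$ is a maximum over a restricted family of connected subgraphs (those containing $v$), so each direction follows almost immediately from comparing feasible regions.

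For the inequality $\max_{v} \Acal^{\mathrm{loc}}_c(v) \le \Acal_c(G)$, I would observe that for every $v \in V(G)$, any connected subgraph $H$ with $v \in V(H)$ and $|V(H)| \ge 2$ is a fortiori admissible in the definition of $\Acal_c(G)$. Hence $D_c(H) \le \Acal_c(G)$, and taking the maximum over such $H$ yields $\Acal^{\mathrm{loc}}_c(v) \le \Acal_c(G)$ for each $v$; the maximum over $v$ preserves this bound.

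For the reverse inequality $\Acal_c(G) \le \max_v \Acal^{\mathrm{loc}}_c(v)$, I would invoke \Cref{prop:bounds}(iii), which guarantees a connected subgraph $H^\star \subseteq G$ with $|V(H^\star)|\ge 2$ attaining $D_c(H^\star) = \Acal_c(G)$. Pick any vertex $v^\star \in V(H^\star)$, which is possible since $H^\star$ is nonempty. Then $H^\star$ is a valid competitor in the definition of $\Acal^{\mathrm{loc}}_c(v^\star)$, so
\[
\Acal_c(G) = D_c(H^\star) \le \Acal^{\mathrm{loc}}_c(v^\star) \le \max_{v\in V(G)} \Acal^{\mathrm{loc}}_c(v).
\]

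Combining the two inequalities yields equality. The only real content of the argument is the appeal to attainment; if one did not have \Cref{prop:bounds}(iii) and worked instead with a supremum, a routine limiting argument along a maximizing sequence (which must eventually contain some vertex $v$ infinitely often, since $V(G)$ is finite) would substitute for it, but in the present finite setting there is no genuine obstacle to overcome.
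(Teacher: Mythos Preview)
Your proof is correct and follows essentially the same route as the paper: show $\Acal^{\mathrm{loc}}_c(v)\le \Acal_c(G)$ for each $v$ because the feasible set is smaller, and then observe that any maximizing $H$ for $\Acal_c(G)$ contains some vertex and hence witnesses the reverse inequality. The only cosmetic difference is that you explicitly invoke \Cref{prop:bounds}(iii) for attainment, whereas the paper's one-line argument leaves this implicit.
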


\begin{proof}
Every connected $H$ contributing to $\Acal_c(G)$ contains some $v\in V(H)$ and hence contributes to $\Acal^{\mathrm{loc}}_c(v)$. Conversely, $\Acal^{\mathrm{loc}}_c(v)\le \Acal_c(G)$ holds by definition for each $v$. Taking maxima gives equality.
\end{proof}

\subsection{Effective resistance via energy dissipation}
Here we provide a brief but fully rigorous characterization of effective resistances relying heavily on standard results from \cite{spielman2010lect13}. Let \(r_e\) denote the ordinary edge resistance of \(e\), defined by
\[
    r_e := \frac{1}{w_e}.
\] Thus \(w_e\) is the conductance of \(e\), while \(r_e\) is its reciprocal resistance. Define the diagonal edge-resistance matrix
\[
    \bm{R} := \operatorname{diag}(r_e)_{e\in E(G)}.
\]
Then
\[
    \bm{W}=\bm{R}^{-1}
    =
    \operatorname{diag}(w_e)_{e\in E(G)}.
\]
For a matrix \(\bm{L}\), we write
\[
    \operatorname{span}(\bm{L})
    :=
    \operatorname{im}(\bm{L})
    =
    \{\bm{Lx}:\bm{x}\in\mathbb{R}^{V(G)}\},
\]
the column space of \(\bm{L}\). When \(G\) is connected, the weighted Laplacian satisfies
\[
    \ker(\bm{L})=\operatorname{span}\{\mathbf{1}\},
    \qquad
    \operatorname{span}(\bm{L})=\mathbf{1}^{\perp}
    =
    \left\{\bm{x}\in\mathbb{R}^{V(G)}:\mathbf{1}^{\top}\bm{x}=0\right\}.
\]
Consequently, the symmetric projection onto \(\operatorname{span}(\bm{L})\) is
\[
    \bm{\Pi}
    =
    \bm{I}-\frac{1}{|V(G)|}\mathbf{1}\mathbf{1}^{\top}.
\]
The Moore--Penrose pseudoinverse \(\bm{L}^{\dagger}\) satisfies
\[
    \bm{L}\bm{L}^{\dagger}
    =
    \bm{L}^{\dagger}\bm{L}
    =
    \bm{\Pi},
    \qquad
    \bm{L}\bm{L}^{\dagger}\bm{L}
    =
    \bm{L}.
\]

Finally, the effective resistance matrix of a connected weighted graph \(G\) is the vertex-indexed matrix
\[
    \mathcal{R}_G
    :=
    \bigl(\ReffG(a,b)\bigr)_{a,b\in V(G)},
\]
where
\[
    \ReffG(a,b)
    =
    (\mathbf{e}_a-\mathbf{e}_b)^{\top}
    \bm{L}^{\dagger}
    (\mathbf{e}_a-\mathbf{e}_b).
\]
Equivalently,
\[
    \bigl(\mathcal{R}_G\bigr)_{ab}
    =
    (\bm{L}^{\dagger})_{aa}
    +
    (\bm{L}^{\dagger})_{bb}
    -
    2(\bm{L}^{\dagger})_{ab}.
\]

With the pieces in place, we define effective resistance $\ReffG (a,b)$ as the resistance between some $a,b\in V(G)$ given by the whole network. This is useful for a standard but powerful characterization of effective resistance:

\begin{lemma}[Effective resistance]
    Let $\bm{i}$ be the one-unit electrical flow from $a$ to $b$ for $a,b\in V(G)$. Then $\ReffG (a,b) = \bm{\mathcal{E}(i)}$.
\end{lemma}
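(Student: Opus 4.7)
My plan is to translate the hypothesis into a linear system for the potential, then evaluate the energy directly and recognize the answer as the voltage drop $v_a-v_b$, which is the standard defining quantity for effective resistance.

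First, I would encode the phrase ``one-unit electrical flow from $a$ to $b$'' in the algebraic language of the setup: the external current vector is $\bm{i_{ext}} = \bm{e_a} - \bm{e_b}$, where $\bm{e_x}$ denotes the standard basis vector at vertex $x$. The listed identity $\bm{i_{ext}} = \bm{Lv}$ then forces $\bm{v}$ to solve $\bm{Lv} = \bm{e_a} - \bm{e_b}$. Since $G$ is connected and $\bm{e_a} - \bm{e_b}$ is orthogonal to $\mathbf{1} \in \ker \bm{L}$, this system is consistent, and using the stated identities $\bm{LL^\dagger L} = \bm{L}$ and $\bm{L\Pi} = \bm{L}$ its solution set is the affine line $\bm{v} = \bm{L^\dagger}(\bm{e_a}-\bm{e_b}) + \alpha \mathbf{1}$ for $\alpha\in\bbR$.

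Next, the energy computation is the short chain
\[
\bm{\mathcal{E}(i)} \;=\; \bm{v}^\top \bm{L}\bm{v} \;=\; \bm{v}^\top \bm{i_{ext}} \;=\; \bm{v}^\top(\bm{e_a}-\bm{e_b}) \;=\; v_a - v_b,
\]
where the gauge constant $\alpha$ cancels in the last difference, so the value is well defined on the affine solution set above. Finally, Ohm's law applied at the level of the entire network says that a one-unit current from $a$ to $b$ induces a potential drop equal to the effective resistance $R^{(G)}_{\text{eff }(a,b)}$, hence $v_a - v_b = R^{(G)}_{\text{eff }(a,b)}$, completing the identification.

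The only subtle point — really conceptual rather than technical — is the choice of definition for $R^{(G)}_{\text{eff }(a,b)}$: either the voltage-drop definition (in which case the identification of $v_a-v_b$ with $R^{(G)}_{\text{eff }(a,b)}$ is immediate and the content of the lemma is the energy identity) or Thomson's minimum-energy characterization (in which case one must additionally verify that the potential-induced flow minimizes $\bm{\mathcal{E}(\cdot)}$ among unit $a$--$b$ flows, e.g.\ by orthogonality of potential-induced flows and cycle flows). In both formulations the substantive computation collapses to the two-step identity $\bm{v}^\top \bm{Lv} = \bm{v}^\top \bm{i_{ext}} = v_a - v_b$, so I expect the main obstacle to be fixing the definitional convention and bookkeeping around the pseudoinverse $\bm{L^\dagger}$ rather than any deeper technique.
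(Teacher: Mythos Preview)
Your proposal is correct and follows essentially the same route as the paper: both identify $R^{(G)}_{\text{eff}(a,b)}$ with the voltage drop $v_a-v_b$ and reduce the energy to $\bm{v}^\top\bm{Lv}=\bm{v}^\top\bm{i_{ext}}=v_a-v_b$ via $\bm{Lv}=\bm{i_{ext}}$. The paper additionally passes through the closed form $\bm{i_{ext}}^\top\bm{L^\dagger}\bm{i_{ext}}$ and the identity $\bm{LL^\dagger L}=\bm{L}$, whereas you bypass that intermediate step, but the argument is otherwise identical.
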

\begin{proof}
    Using $\bm{\iext}=\bm{Lv}$ and $\bm{v}=\bm{L^{\dagger}\iext} $, we have
    \begin{align*}
        \ReffG (a,b)=\bm{v}(a)-\bm{v}(b)=\bm{\iext^\top} \bm{v}=\bm{\iext^\top L^{\dagger}\iext} =\bm{v^\top LL^{\dagger}Lv}=\bm{v^\top Lv}=\bm{\mathcal{E}(i)}.
    \end{align*}
\end{proof}
Thus we have two invariant weighted metrics which, while admitting different interpretation of edge weights, embed information locally at each edge and can thus be compared piecewise analytically in a formal way; such is the focus of this section, so we can now define one of the main contributions of this paper.
\subsection{A local Foster inequality}

Let $\ReffX(e)$ denote the effective resistance between the endpoints of edge $e$ computed in the connected network $X$ with conductances. We use:
\begin{itemize}
\item (\emph{Foster’s First Theorem} \cite{foster1949}) For $X=(H,c|_{E(H)})$, $\sum_{e\in E(H)} c(e)\,\ReffX(e)=|V(H)|-1$.
\item (\emph{Rayleigh monotonicity}) Deleting edges or reducing conductances does not decrease effective resistance between any two vertices.
\end{itemize}

\begin{theorem}[Local Foster inequality]\label{thm:local-foster}
Let $H\subseteq G$ be any connected subgraph with inherited conductances. Then
\[
\sum_{e\in E(H)}c(e)\,\ReffG(e)\ \le\ |V(H)|-1.
\]
\end{theorem}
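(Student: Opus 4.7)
The plan is to peel the theorem into two facts already available: an exact identity for $H$ viewed as its own network (Foster's First Theorem), and an edgewise inequality linking effective resistances in $H$ to those in $G$ (Rayleigh monotonicity). The task is to combine them correctly, and nothing else appears to be needed.

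First, I would apply Foster's First Theorem to $(H, c|_{E(H)})$ as a standalone connected electrical network. Since $H$ is connected and carries well-defined conductances inherited from $c$, this yields the exact identity
\[
\sum_{e\in E(H)} c(e)\, R^{(H)}_{\text{eff}(e)} \;=\; |V(H)|-1.
\]
Second, for each $e\in E(H)$ I would compare $R^{(H)}_{\text{eff}(e)}$ with $R^{(G)}_{\text{eff}(e)}$. Since $H$ is obtained from $G$ by removing the edges in $E(G)\setminus E(H)$ (equivalently, driving their conductances to zero), Rayleigh monotonicity implies that the effective resistance between the endpoints of any $e\in E(H)$ can only grow in passing from $G$ to $H$, giving
\[
R^{(G)}_{\text{eff}(e)} \;\le\; R^{(H)}_{\text{eff}(e)} \qquad (e\in E(H)).
\]
Multiplying by the positive weight $c(e)$ and summing over $e\in E(H)$, then invoking the Foster identity above on the right-hand side, immediately yields the claimed inequality.

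The one subtlety I anticipate is ensuring Rayleigh monotonicity is being applied on a consistent vertex set when $H$ is not spanning. Vertices in $V(G)\setminus V(H)$ act as internal nodes in the $G$-network but are simply absent from the $H$-network; this is handled by the standard convention that isolated vertices (or, equivalently, incident zero-conductance edges) contribute nothing to the Dirichlet energy characterization of effective resistance from the preceding lemma, so the edgewise comparison remains valid. Once this bookkeeping is cleared, the theorem collapses to a one-line combination of the two quoted facts, and no independent estimate on $R^{(G)}_{\text{eff}(e)}$ is required.
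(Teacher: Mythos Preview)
Your proposal is correct and matches the paper's proof essentially line for line: apply Foster's First Theorem to the subnetwork $X=(H,c|_{E(H)})$ to get the identity $\sum_{e\in E(H)} c(e)\,R_{\text{eff}(e)}^{(X)}=|V(H)|-1$, then use Rayleigh monotonicity edgewise to replace $R_{\text{eff}(e)}^{(X)}$ by the smaller $R_{\text{eff}(e)}^{(G)}$. Your extra remark about the vertex-set bookkeeping when $H$ is not spanning is a nice touch that the paper leaves implicit.
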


\begin{proof}
Let $X:=(H,c|_{E(H)})$. For each $e\in E(H)$, Rayleigh monotonicity yields $\ReffG(e)\le \ReffX(e)$. Multiplying by $c(e)$ and summing over $e\in E(H)$ gives
\[
\sum_{e\in E(H)}c(e)\,\ReffG(e)\ \le\ \sum_{e\in E(H)} c(e)\,\ReffX(e)\ =\ |V(H)|-1,
\]
using Foster's First Theorem on $X$.
\end{proof}

\begin{proposition}[An explicit upper bound]\label{prop:cs-upper}
For any connected $H\subseteq G$ with $|V(H)|\ge 2$,
\[
D_c(H)\;\le\;\sqrt{\,\frac{\displaystyle\sum_{e\in E(H)} \dfrac{c(e)}{\ReffG(e)}}{\,|V(H)|-1\,}}\,.
\]
Consequently,
\[
\Acal_c(G)\ \le\ \max_{\substack{H\subseteq G\\ H\ \mathrm{connected}}}
\sqrt{\,\frac{\displaystyle\sum_{e\in E(H)} \dfrac{c(e)}{\ \ReffG(e)}}{\,|V(H)|-1\,}}\,.
\]
\end{proposition}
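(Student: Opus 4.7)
The plan is to derive the bound by coupling the Cauchy--Schwarz inequality on the edge sums with the local Foster inequality of \Cref{thm:local-foster}. First I would observe that squaring the claimed inequality and clearing the denominator reduces the problem to the equivalent quadratic estimate
\[
\Bigl(\sum_{e\in E(H)} c(e)\Bigr)^{2}\ \le\ \bigl(|V(H)|-1\bigr)\sum_{e\in E(H)} \frac{c(e)}{R_{\text{eff}(e)}^{(G)}}.
\]

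The key step is to factor the summand $c(e)$ as $a_e b_e$ with
\[
a_e := \sqrt{c(e)\,R_{\text{eff}(e)}^{(G)}},\qquad b_e := \sqrt{c(e)/R_{\text{eff}(e)}^{(G)}},
\]
both of which are positive since $c(e)>0$ by hypothesis and the endpoints of each $e\in E(H)$ lie in a common component of $G$, making $R_{\text{eff}(e)}^{(G)}\in(0,\infty)$. Applying Cauchy--Schwarz to this factorization yields
\[
\Bigl(\sum_{e\in E(H)} c(e)\Bigr)^{2}\ \le\ \Bigl(\sum_{e\in E(H)} c(e)\,R_{\text{eff}(e)}^{(G)}\Bigr)\Bigl(\sum_{e\in E(H)} \frac{c(e)}{R_{\text{eff}(e)}^{(G)}}\Bigr),
\]
at which point I would invoke \Cref{thm:local-foster} to replace the first bracketed factor on the right with the upper bound $|V(H)|-1$. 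Dividing by $(|V(H)|-1)^{2}$ and taking the nonnegative square root delivers the claimed inequality for $D_c(H)$. The consequent upper bound on $\Acal_c(G)$ is then immediate by maximizing both sides over the finite family of feasible connected subgraphs, noting that the maximum of a square root equals the square root of the maximum.

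The proof encounters no substantive obstacle: the whole argument is a one-line Cauchy--Schwarz split followed by a direct appeal to the previously established Foster bound. The only verification worth flagging is the strict positivity of $R_{\text{eff}(e)}^{(G)}$ needed to justify the factorization $c(e)=a_e b_e$ with real square roots, which is ensured by the standing assumption $c:E\to(0,\infty)$ together with the fact that each edge has distinct endpoints connected in $G$. If anything, the more interesting question is whether the resulting bound is ever tight; tightness in Cauchy--Schwarz would require $c(e) R_{\text{eff}(e)}^{(G)}$ to be constant on $E(H)$, which is a nontrivial geometric condition worth noting as a remark, though it is not required for the proof itself.
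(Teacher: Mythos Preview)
Your proposal is correct and matches the paper's proof essentially line for line: the same factorization $a_e=\sqrt{c(e)R_{\text{eff}(e)}^{(G)}}$, $b_e=\sqrt{c(e)/R_{\text{eff}(e)}^{(G)}}$, the same Cauchy--Schwarz step, and the same appeal to \Cref{thm:local-foster} to bound the first factor by $|V(H)|-1$. Your additional remarks on the positivity of $R_{\text{eff}(e)}^{(G)}$ and the tightness condition are fine side observations but not part of the paper's proof.
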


\begin{proof}
Let $a_e:=\sqrt{c(e)\ReffG(e)}$ and $b_e:=\sqrt{c(e)/\ReffG(e)}$. Then \begin{align*}\sum_{e\in E(H)}c(e)=\sum a_e b_e\le \big(\sum a_e^2\big)^{1/2}\big(\sum b_e^2\big)^{1/2}\text{ (by Cauchy--Schwarz) }\\=\sqrt{\big(\sum c(e)\ReffG(e){\big)\big(\sum c(e)/\ReffG(e)}\big)}.\end{align*} By Theorem~\ref{thm:local-foster}, $\sum_{e\in E(H)} c(e)\,\ReffG(e) \le |V(H)|-1$, hence
\[
\Bigl(\sum_{e\in E(H)} c(e)\Bigr)^2
\le
(|V(H)|-1)\Bigl(\sum_{e\in E(H)} \frac{c(e)}{\ReffG(e)}\Bigr).
\]
Dividing by $|V(H)|-1$,
\[
\frac{\sum_{e\in E(H)} c(e)}{|V(H)|-1} = D_c (H)
\le
\sqrt{\frac{\displaystyle\sum_{e\in E(H)} \frac{c(e)}{\ReffG(e)}}{|V(H)|-1}}.
\]
Thus take maxima over subgraphs to yield the result.
\end{proof}
\begin{figure}[H]
\centering
\begin{tikzpicture}[
    vertex/.style={circle, draw=black!80, fill=blue!20, thick, minimum size=8mm},
    edge/.style={draw=black!70, thick},
    label/.style={font=\small, fill=white, inner sep=2pt},
    >=Stealth
]

\node[vertex] (v1) at (0,0) {$v_1$};
\node[vertex] (v2) at (2.5,0) {$v_2$};
\node[vertex] (v3) at (1.25,2) {$v_3$};
\node[vertex] (v4) at (4,1.5) {$v_4$};

\draw[edge] (v1) -- node[label, below] {$c=2$} (v2);
\draw[edge] (v1) -- node[label, left] {$c=1$} (v3);
\draw[edge] (v2) -- node[label, right] {$c=3$} (v3);
\draw[edge] (v2) -- node[label, below right] {$c=1$} (v4);
\draw[edge] (v3) -- node[label, above] {$c=2$} (v4);

\begin{scope}[on background layer]
\node[draw=red!60, thick, dashed, rounded corners=8pt, fit={(v1) (v2) (v3)}, 
      inner sep=8pt, fill=red!5] (subgraph) {};
\end{scope}

\node[below=0.2cm of subgraph, text=red!70!black, font=\small\itshape] {Subgraph $H$};

\node[below=1.5cm of v2, align=left, font=\footnotesize, draw=black!50, 
      fill=yellow!10, rounded corners=3pt, inner sep=6pt] (calc) {
    For $H = \{v_1, v_2, v_3\}$: \\[2pt]
    $|V(H)| = 3$, \quad $|E(H)| = 3$ \\[2pt]
    $\sum_{e \in E(H)} c(e) = 2 + 1 + 3 = 6$ \\[2pt]
    $D_c(H) = \frac{6}{3-1} = 3$
};

\node[right=1.5cm of calc, align=left, font=\footnotesize, draw=black!50,
      fill=green!10, rounded corners=3pt, inner sep=6pt] (inequality) {
    \textbf{Local Foster:} \\[2pt]
    $\displaystyle\sum_{e \in E(H)} c(e) \ReffG(e) \leq |V(H)| - 1$ \\[4pt]
    Implies: \\[2pt]
    $\displaystyle D_c(H) \leq \sqrt{\frac{\displaystyle\sum_{e \in E(H)} \frac{c(e)}{\ReffG(e)}}{|V(H)|-1}}$
};

\end{tikzpicture}
\caption{A small weighted network illustrating the Local Foster inequality from Theorem~\ref{thm:local-foster}. 
The highlighted subgraph $H$ contains vertices $\{v_1, v_2, v_3\}$ with three edges carrying 
conductances $c = 2, 1, 3$. The inequality $\sum_{e \in E(H)} c(e) \ReffG(e) \leq |V(H)| - 1$ 
provides an upper bound on the sum of conductance-weighted effective resistances, which in turn yields 
the explicit upper bound for the weighted density $D_c(H)$.}
\label{fig:foster_rayleigh_inequality}
\end{figure}
\begin{proposition}[Immediate local lower bounds]\label{prop:local-lower}
For any connected $H\subseteq G$ with $|V(H)|\ge 2$,
\[
\Acal_c(G)\ \ge\ D_c(H)\ =\ \frac{\sum_{e\in E(H)} c(e)}{|V(H)|-1}
\ \ge\ \max_{e\in E(H)} c(e).
\]
\end{proposition}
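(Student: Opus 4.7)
The plan is to establish the displayed chain as a pair of lower bounds on $\Acal_c(G)$, each obtained by invoking the maximizing definition of $\Acal_c(G)$ on an appropriate witness subgraph. The leftmost inequality $\Acal_c(G)\ge D_c(H)$ is immediate: since $\Acal_c(G)$ is defined as the maximum of $D_c$ over all connected subgraphs with $|V|\ge 2$, the given $H$ directly witnesses the lower bound. The middle equation is simply the definition of $D_c(H)$ and requires no work.

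For the rightmost piece, the literal claim is $D_c(H)\ge\max_{e\in E(H)}c(e)$, equivalently $\sum_{e\in E(H)}c(e)\ge(|V(H)|-1)\max_{e\in E(H)}c(e)$. The approach I would take is to fix $e^\star\in E(H)$ with $c(e^\star)$ maximal and exhibit the two-vertex, one-edge subgraph $H^\star=(\{u,v\},\{e^\star\})$, where $u,v$ are the endpoints of $e^\star$. This $H^\star$ is connected with $|V(H^\star)|=2$, so $D_c(H^\star)=c(e^\star)$, and by the maximizing definition of $\Acal_c(G)$ we obtain $\Acal_c(G)\ge c(e^\star)=\max_{e\in E(H)}c(e)$.

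The hard part, and the main obstacle, is that this argument produces a bound on $\Acal_c(G)$ rather than on $D_c(H)$, and the literal middle inequality fails in general: the path $P_3$ with edge conductances $(1,100)$ has $D_c=\tfrac{101}{2}<100=\max_e c(e)$, so $D_c(H)\ge\max_{e\in E(H)}c(e)$ is not provable for arbitrary connected $H$. Consequently the chain as typeset is not defensible in full generality; the strongest valid reading is the pair of parallel lower bounds $\Acal_c(G)\ge D_c(H)$ and $\Acal_c(G)\ge\max_{e\in E(H)}c(e)$, obtained by the witness-subgraph construction above with $H$ and $H^\star$ playing the two witness roles. My proof proposal therefore delivers these two bounds on $\Acal_c(G)$ while flagging that the literal transitive chain requires this reinterpretation (equivalently, substituting $\Acal_c(G)$ for $D_c(H)$ on the left of the rightmost inequality).
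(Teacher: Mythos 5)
Your proposal is correct and matches the paper's intended argument: the paper likewise proves the first inequality by noting that $H$ is a feasible witness in the maximization defining $\Acal_c(G)$, and proves ``the second'' via the two-vertex single-edge subgraph $H^\star$ --- exactly your construction. More importantly, you have correctly diagnosed a genuine error in the statement itself: the literal chain $D_c(H)\ge\max_{e\in E(H)}c(e)$ is false, as your $P_3$ example with conductances $(1,100)$ shows ($D_c=\tfrac{101}{2}<100$), and the paper's own Corollary on trees (which observes that the density of a subtree is an \emph{average} of its edge conductances, hence at most the maximum) contradicts the claimed direction whenever $H$ is a tree with non-constant weights. The paper's one-line proof of the second inequality only establishes $\Acal_c(G)\ge\max_{e\in E(H)}c(e)$, not the transitive chain as typeset; your reinterpretation as two parallel lower bounds on $\Acal_c(G)$ is the correct reading and the strongest statement the argument supports.
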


\begin{proof}
The first inequality holds since $\Acal_c(G)$ is the maximum over connected $H$. The second follows from the feasible two-vertex subgraph containing a single edge $e$.
\end{proof}

\begin{example}[Separation from totals/degree]\label{prop:sep}
With unit weights $c\equiv 1$, the triangle $K_3$ and the $3$-edge star $K_{1,3}$ both satisfy $C(G)=3$ and $\Delta_1(G)=2$, yet
\[
\Acal_1(K_3)=\frac{3}{2}\qquad\text{and}\qquad \Acal_1(K_{1,3})=1.
\]
\end{example}
% in preamble:
% \usepackage{tikz}
% \usetikzlibrary{calc}

\begin{figure}[h]
\centering
\begin{minipage}{0.46\textwidth}
\centering
\begin{tikzpicture}[scale=1.0,
  vtx/.style={circle, draw=black, fill=white, inner sep=1.3pt, line width=0.9pt},
  edg/.style={draw=black, line width=0.9pt}
]
% K3: triangle
\node[vtx] (a) at (0,0) {};
\node[vtx] (b) at (1.4,0) {};
\node[vtx] (c) at (0.7,1.2) {};
\draw[edg] (a)--(b)--(c)--(a);
\end{tikzpicture}

\small $K_3$
\end{minipage}
\hfill
\begin{minipage}{0.46\textwidth}
\centering
\begin{tikzpicture}[scale=1.0,
  vtx/.style={circle, draw=black, fill=white, inner sep=1.3pt, line width=0.9pt},
  edg/.style={draw=black, line width=0.9pt}
]
% K_{1,3}: 3-edge star
\node[vtx] (o) at (0,0) {};
\node[vtx] (u) at (0,1.2) {};
\node[vtx] (l) at (-1.1,-0.7) {};
\node[vtx] (r) at (1.1,-0.7) {};
\draw[edg] (o)--(u);
\draw[edg] (o)--(l);
\draw[edg] (o)--(r);
\end{tikzpicture}

\small $K_{1,3}$
\end{minipage}
\caption{Two graphs with the same totals ($C(G)=3$) and maximum degree ($\Delta_1(G)=2$), but different $\mathcal{A}_1$.}
\end{figure}

These observations follow immediately from the fact that $\Acal_1(G)$ is exactly fractional arboricity. Further results on heterogeneous $c$ are discussed in Section~\ref{sec:computation}.
\section{Algebraic structure}\label{sec:algebra}

In addition to its analytic and extremal properties, the weighted arboricity
functional $\Acal_c(G)$ interacts in a clean way with natural constructions on weighted
graphs. In this section we investigate one such feature:
 $\Acal_c(G)$ behaves predictably under edge-disjoint unions. At the level of
isomorphism classes of weighted graphs, disjoint union induces a commutative
monoid structure, and $\Acal_c(G)$ is idempotent with respect to this operation
(Section~\ref{subsec:disjoint-unions}). These observations justify
viewing $\Acal_c(G)$ like a max invariant in structure with a simple algebraic profile.

\subsection{Disjoint unions and max structure}
\label{subsec:disjoint-unions}
The behavior of $\Acal_c(G)$ under weighted embeddings reflects its
monotonicity with respect to enlarging graphs and increasing edge weights.
A complementary structural feature appears when we combine graphs via
edge-disjoint union. In that setting, $\Acal_c(G)$ is determined simply by the
maximum of the component arboricities, and this induces a commutative
idempotent monoid structure at the level of isomorphism classes.
We record this next.
\begin{definition}[Edge-disjoint union of weighted graphs]
Let $(G_1,c_1)$ and $(G_2,c_2)$ be finite weighted graphs. Assume that
their vertex sets are disjoint and hence there are no edges between
$V(G_1)$ and $V(G_2)$.

The \emph{edge-disjoint union} of $(G_1,c_1)$ and $(G_2,c_2)$ is the
weighted graph $(G,c)$ defined by
\[
G := G_1 \sqcup G_2,\qquad
V(G) := V(G_1)\cup V(G_2),\quad
E(G) := E(G_1)\cup E(G_2),
\]
and
\[
c(e) :=
\begin{cases}
c_1(e), & e\in E(G_1),\\[2pt]
c_2(e), & e\in E(G_2).
\end{cases}
\]
By construction, $G$ has exactly two connected components $G_1$ and
$G_2$.
\end{definition}

\begin{proposition}[Weighted arboricity of an edge-disjoint union]
\label{prop:disjoint-union}
Let $(G_1,c_1)$ and $(G_2,c_2)$ be finite weighted graphs, and let
$(G,c) := (G_1,c_1)\sqcup (G_2,c_2)$ be their edge-disjoint union as
above. Then
\[
\Acal_c(G)
\;=\;
\max\bigl\{\Acal_{c_1}(G_1),\,\Acal_{c_2}(G_2)\bigr\}.
\]

More generally, if $(G,c)$ is the edge-disjoint union of finitely many
weighted graphs $(G_i,c_i)$, $1\le i\le k$, then
\[
\Acal_c(G)
\;=\;
\max_{1\le i\le k} \Acal_{c_i}(G_i).
\]
\end{proposition}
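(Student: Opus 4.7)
The plan is to prove equality by showing both inequalities and then extending to finitely many components. The core structural observation driving everything is that in an edge-disjoint union $G = G_1 \sqcup G_2$, the two pieces $G_1$ and $G_2$ lie in distinct connected components of $G$, so any connected subgraph $H \subseteq G$ with $|V(H)| \ge 2$ is forced to be contained entirely within a single $G_i$.

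For the lower bound $\Acal_c(G) \ge \max\{\Acal_{c_1}(G_1), \Acal_{c_2}(G_2)\}$, I would invoke subgraph monotonicity (\Cref{prop:subgraph}) directly: each $G_i$ is a subgraph of $G$ with inherited conductances $c|_{E(G_i)} = c_i$, so $\Acal_{c_i}(G_i) \le \Acal_c(G)$ for $i = 1, 2$, and taking the maximum yields the desired inequality.

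For the upper bound, I would let $H$ be an arbitrary connected subgraph of $G$ with $|V(H)| \ge 2$, and use the component-separation remark to conclude $H \subseteq G_j$ for some $j \in \{1,2\}$. Since the conductance $c$ restricted to $E(H)$ agrees with $c_j|_{E(H)}$, we get $D_c(H) = D_{c_j}(H) \le \Acal_{c_j}(G_j) \le \max\{\Acal_{c_1}(G_1), \Acal_{c_2}(G_2)\}$. Taking the maximum over all feasible $H$ in $G$ yields $\Acal_c(G) \le \max\{\Acal_{c_1}(G_1), \Acal_{c_2}(G_2)\}$. Combining the two inequalities gives the binary case.

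For the general statement with $k$ components, I would proceed by induction on $k$, using associativity of $\max$ together with the binary case: writing $G = (G_1 \sqcup \cdots \sqcup G_{k-1}) \sqcup G_k$, the inductive hypothesis gives $\Acal_{c'}(G_1 \sqcup \cdots \sqcup G_{k-1}) = \max_{i < k} \Acal_{c_i}(G_i)$, and one further application of the binary result closes the argument. Alternatively, the structural argument extends verbatim: any connected $H \subseteq G$ lies in a single $G_i$, so $D_c(H) \le \max_i \Acal_{c_i}(G_i)$, while each $G_i$ is a subgraph of $G$. The main obstacle is negligible here; the only genuine content is the observation that connectedness prohibits straddling components, after which subgraph monotonicity and the definition of $D_c$ do all the work.
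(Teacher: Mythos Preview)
Your proposal is correct and follows essentially the same approach as the paper: both identify that any connected $H\subseteq G$ must lie entirely within one $G_i$, deduce $D_c(H)=D_{c_i}(H)$, and then pass to maxima, with the general $k$-component case handled by induction. The only cosmetic difference is that you phrase the lower bound via subgraph monotonicity while the paper writes the equality directly as a splitting of the maximization domain.
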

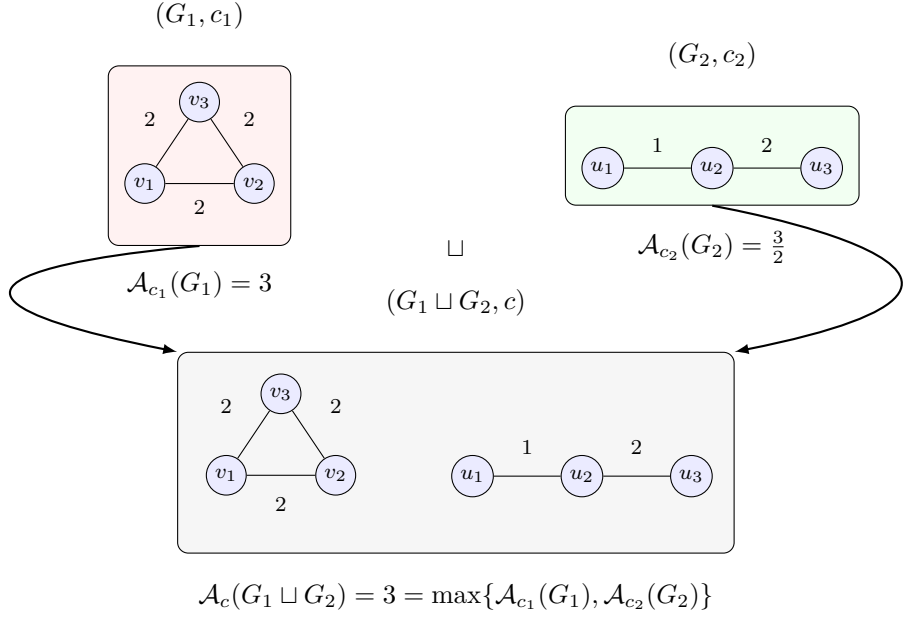
\begin{figure}[H]
\centering
\begin{tikzpicture}[>=latex, node distance=3.6cm, every node/.style={font=\small}]

\node[draw, rounded corners, fill=red!5, inner sep=6pt] (G1box) {
  \begin{tikzpicture}[scale=0.9, every node/.style={font=\scriptsize}]
    \node[circle,draw,fill=blue!8,inner sep=2pt] (v1) at (0,0) {$v_1$};
    \node[circle,draw,fill=blue!8,inner sep=2pt] (v2) at (1.6,0) {$v_2$};
    \node[circle,draw,fill=blue!8,inner sep=2pt] (v3) at (0.8,1.2) {$v_3$};
    \draw (v1) -- node[below] {$2$} (v2);
    \draw (v1) -- node[above left] {$2$} (v3);
    \draw (v2) -- node[above right] {$2$} (v3);
  \end{tikzpicture}
};
\node[above=0.35cm of G1box] {$ (G_1,c_1)$};
\node[below=0.2cm of G1box] (A1label) {$ \Acal_{c_1}(G_1) = 3 $};

\node[draw, rounded corners, fill=green!5, inner sep=6pt, right=of G1box] (G2box) {
  \begin{tikzpicture}[scale=0.9, every node/.style={font=\scriptsize}]
    \node[circle,draw,fill=blue!8,inner sep=2pt] (u1) at (0,0) {$u_1$};
    \node[circle,draw,fill=blue!8,inner sep=2pt] (u2) at (1.6,0) {$u_2$};
    \node[circle,draw,fill=blue!8,inner sep=2pt] (u3) at (3.2,0) {$u_3$};
    \draw (u1) -- node[above] {$1$} (u2);
    \draw (u2) -- node[above] {$2$} (u3);
  \end{tikzpicture}
};
\node[above=0.35cm of G2box] {$ (G_2,c_2)$};
\node[below=0.2cm of G2box] (A2label) {$ \Acal_{c_2}(G_2) = \tfrac{3}{2} $};

\node[draw, rounded corners, fill=gray!7, inner sep=8pt,
      below=2.6cm of $(G1box)!0.5!(G2box)$] (Gbox) {
  \begin{tikzpicture}[scale=0.9, every node/.style={font=\scriptsize}]
    % G1 on the left
    \node[circle,draw,fill=blue!8,inner sep=2pt] (v1) at (0,0) {$v_1$};
    \node[circle,draw,fill=blue!8,inner sep=2pt] (v2) at (1.6,0) {$v_2$};
    \node[circle,draw,fill=blue!8,inner sep=2pt] (v3) at (0.8,1.2) {$v_3$};
    \draw (v1) -- node[below] {$2$} (v2);
    \draw (v1) -- node[above left] {$2$} (v3);
    \draw (v2) -- node[above right] {$2$} (v3);

    \node[circle,draw,fill=blue!8,inner sep=2pt] (u1) at (3.6,0) {$u_1$};
    \node[circle,draw,fill=blue!8,inner sep=2pt] (u2) at (5.2,0) {$u_2$};
    \node[circle,draw,fill=blue!8,inner sep=2pt] (u3) at (6.8,0) {$u_3$};
    \draw (u1) -- node[above] {$1$} (u2);
    \draw (u2) -- node[above] {$2$} (u3);
  \end{tikzpicture}
};
\node[above=0.35cm of Gbox] {$ (G_1 \sqcup G_2, c)$};
\node[below=0.25cm of Gbox] (Alabel)
  {$ \Acal_c(G_1 \sqcup G_2)
      = 3
      = \max\{\Acal_{c_1}(G_1), \Acal_{c_2}(G_2)\}$};

\draw[->, thick]
  (G1box.south) .. controls +(0,-0) and +(-5.0,1.0) .. (Gbox.north west);
\draw[->, thick]
  (G2box.south) .. controls +(0,0) and +(5.0,1.0) .. (Gbox.north east);
\node at ($(G1box.south)!0.5!(G2box.south) + (0,-0.3)$) {$\sqcup$};

\end{tikzpicture}
\caption{Disjoint union as a monoid: $\Acal_c$ is a homomorphism
$(\mathcal{G},\sqcup) \to (\mathbb{R}_{\ge 0},\max)$.}
\end{figure}
\FloatBarrier

\begin{proof}
We first prove the two-component case.

Let $(G,c)=(G_1,c_1)\sqcup(G_2,c_2)$.
By definition,
\[
\Acal_c(G)
=
\max\Bigl\{ D_c(H) \,:\, H\subseteq G\ \text{connected},\ |V(H)|\ge 2\Bigr\}.
\]

\smallskip\noindent
\emph{Step 1: Any connected subgraph lies in a single component.}
Let $H\subseteq G$ be a connected subgraph with $|V(H)|\ge 2$. Since
there are no edges between $V(G_1)$ and $V(G_2)$, any path in $G$
lies entirely inside $G_1$ or entirely inside $G_2$.  
If $V(H)$ met both $V(G_1)$ and $V(G_2)$, then there would be no path
inside $H$ joining a vertex from $V(G_1)\cap V(H)$ to a vertex from
$V(G_2)\cap V(H)$, contradicting the connectedness of $H$.
Therefore
\[
H\subseteq G_1
\quad\text{or}\quad
H\subseteq G_2.
\]

\smallskip\noindent
\emph{Step 2: Densities agree with the component densities.}
Suppose $H\subseteq G_i$ for some $i\in\{1,2\}$. Then, by the
definition of $c$,
\[
\sum_{e\in E(H)} c(e)
=
\sum_{e\in E(H)} c_i(e).
\]
Since $V(H)$ is the same regardless of whether we regard $H$ as a
subgraph of $G$ or of $G_i$, it follows that
\[
D_c(H)
=
\frac{\sum_{e\in E(H)} c(e)}{|V(H)|-1}
=
\frac{\sum_{e\in E(H)} c_i(e)}{|V(H)|-1}
=
D_{c_i}(H).
\]

\smallskip\noindent
\emph{Step 3: Maximization splits over components.}
Using Step~1 and Step~2, we can rewrite the definition of $\Acal_c(G)$ as
\begin{align*}
\Acal_c(G)
&= \max\Bigl\{ D_c(H) \,:\, H\subseteq G\ \text{connected},\ |V(H)|\ge 2\Bigr\}\\
&= \small \max\Bigl(
   \max\{D_c(H) : H\subseteq G_1\ \text{connected},\ |V(H)|\ge 2\},\,
   \max\{D_c(H) : H\subseteq G_2\ \text{connected},\ |V(H)|\ge 2\}
   \Bigr)\\
&= \max\bigl\{\Acal_{c_1}(G_1),\,\Acal_{c_2}(G_2)\bigr\},
\end{align*}
which is the desired identity.

\smallskip
For a finite disjoint union of $k\ge 2$ components, we argue by
induction on $k$. The case $k=2$ is exactly the argument above.
Assume the statement holds for $k-1$ components. Write
\[
(G,c) = \Bigl(\bigsqcup_{i=1}^{k-1}(G_i,c_i)\Bigr)\sqcup(G_k,c_k)
       =: (H,d)\sqcup(G_k,c_k).
\]
By the two-component case,
\[
\Acal_c(G)
=
\max\bigl\{\Acal_d(H),\,\Acal_{c_k}(G_k)\bigr\}.
\]
By the induction hypothesis applied to $(H,d)$,
\[
\Acal_d(H) = \max_{1\le i\le k-1} \Acal_{c_i}(G_i).
\]
Combining these gives
\[
\Acal_c(G)
=
\max\Bigl(\max_{1\le i\le k-1} \Acal_{c_i}(G_i),\,\Acal_{c_k}(G_k)\Bigr)
=
\max_{1\le i\le k} \Acal_{c_i}(G_i),
\]
completing the induction and the proof.
\end{proof}

\begin{corollary}[Idempotent behavior under disjoint union]
Let $\mathcal{G}$ denote the set of isomorphism classes of finite
weighted graphs, and let $\sqcup$ denote edge-disjoint union on
representatives. Then:

\begin{enumerate}
\item $(\mathcal{G},\sqcup)$ is a commutative monoid with identity
element given by the edgeless graph (with arbitrary vertex set and
zero weights).
\item For any $[G,c]\in\mathcal{G}$,
\[
  \Acal_c(G\sqcup G)
=
\max\{\Acal_c(G),\Acal_c(G)\}
=
\Acal_c(G),
\]
so $\Acal_c(G)$ is idempotent with respect to disjoint union at the level of
its values.
\end{enumerate}
\end{corollary}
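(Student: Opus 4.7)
My plan is to split the argument along the two numbered claims. For (1) I would verify the monoid axioms directly, while (2) is essentially an immediate corollary of Proposition~\ref{prop:disjoint-union} applied to a single graph with itself.

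For (1), I would first check that edge-disjoint union descends to a well-defined operation on isomorphism classes: if $(G_i,c_i)\cong(G_i',c_i')$ for $i=1,2$, the component isomorphisms assemble into a weight-preserving isomorphism $(G_1\sqcup G_2,c)\cong(G_1'\sqcup G_2',c')$, so the class $[G_1\sqcup G_2]$ is independent of representatives and of the relabeling used to make vertex sets disjoint. Commutativity and associativity then follow from the canonical vertex bijections $V(G_1)\sqcup V(G_2)\to V(G_2)\sqcup V(G_1)$ and $V(G_1)\sqcup(V(G_2)\sqcup V(G_3))\to(V(G_1)\sqcup V(G_2))\sqcup V(G_3)$, each of which preserves edges and conductances on the nose. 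The identity element requires a small convention: since adjoining isolated vertices changes the isomorphism class of a nonempty graph, I would take the identity to be the (unique) empty graph $\varnothing$ with no vertices and no edges, so that $G\sqcup\varnothing = G$ trivially; alternatively, one can quotient $\mathcal{G}$ by the relation that identifies graphs differing only by isolated zero-degree vertices, in which case every edgeless graph represents the same class and the author's phrasing is recovered verbatim.

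For (2) there is almost nothing to do: taking two vertex-disjoint relabeled copies of $(G,c)$ and applying Proposition~\ref{prop:disjoint-union} with $k=2$ yields $\Acal_c(G\sqcup G)=\max\{\Acal_c(G),\Acal_c(G)\}=\Acal_c(G)$, which is the idempotence statement. Since this is a claim about numerical values in $(\mathbb{R}_{\ge 0},\max)$, no further identification is needed beyond the well-definedness established in (1). The main (mild) obstacle is thus not mathematical content but the disambiguation of \emph{edgeless graph with arbitrary vertex set} into a genuine monoid identity; once that convention is fixed, the remainder of the proof is purely formal, with all substantive input supplied by Proposition~\ref{prop:disjoint-union}.
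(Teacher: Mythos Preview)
Your proposal is correct and aligns with the paper's intent: the corollary is stated without proof, as it is meant to follow immediately from Proposition~\ref{prop:disjoint-union}, and your argument for part~(2) is exactly that one-line deduction. You actually go further than the paper by spelling out the monoid axioms for part~(1) and, in particular, by flagging the ambiguity in ``edgeless graph with arbitrary vertex set'' as an identity element---your two resolutions (take the empty graph, or quotient out isolated vertices) are both sound, and the paper's phrasing tacitly presupposes one of them without saying so.
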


\section{Computation}\label{sec:computation}

This section gives a small computational exhibit for the numerical procedure used to generate the overlay figures comparing the baseline density \(A_1(G)\) against the upper bound from Section~\ref{sec:local}, \(\mathrm{CSBound}(H{=}G)\), in both the unit--conductance and random $c$ cases. All plots in this section were produced by the script \texttt{scripts/make\_figs.py}, which outputs both an overlay plot of \(A_c(G)\) and \(\mathrm{CSBound}(H{=}G)\) versus \(v=|V|\) for some common graph families and a tightness--ratio plot \(\mathrm{CSBound}(H{=}G)/A_c(G)\). While the exhibits may be very useful for future analysis, their inclusion in this paper will be limited to one graph family for brevity. We restrict attention here to the hypercube family $Q_d$, which is highly structured (edge–transitive) and thus admits a clean, representative testbed for the bound.

\subsection{Quantities plotted}

For a graph \(G=(V,E)\) with \(|V|=v\ge 2\), the script plots the unit--weight functional
\[
A_1(G)\;=\;\max\Bigl\{\frac{|E(H)|}{|V(H)|-1}:\ H \subseteq G\ \text{is connected}\Bigr\}.
\]
For very small graphs (\(v\le 12\) by default) this is computed by brute force enumeration over all connected induced vertex subsets. For larger graphs, the script returns \(A_1(G)\) only in families where the exact value is known a priori without further optimization: for trees $T$, including paths and stars, \(A_1(T)=1\); for complete graphs, cycles, complete bipartite graphs, and hypercubes, \(A_1(G)=D_1(G)=|E|/(|V|-1)\). 
The explicit upper bound evaluated at \(H=G\) in the unit conductance case comes from Proposition~\ref{prop:cs-upper}:
\[
\mathrm{CSBound}(H{=}G)
\;:=\;
D_c(H=G)\;\le\;\sqrt{\,\frac{\displaystyle\sum_{e\in E(H)} \dfrac{c(e)}{\ReffG(e)}}{\,|V(H)|-1\,}}\,.
\]
where $\ReffG(e)$ is the effective resistance between the endpoints of \(e\) in the ambient graph \(G\).
In the overlay figures we plot the pair \(\bigl(A_1(G),\mathrm{CSBound}(H{=}G)\bigr)\) against \(v=|V|\), and in the tightness figures we plot the ratio \(\mathrm{CSBound}(H{=}G)/A_1(G)\).

\subsection{Computation on the hypercube family}\label{sec:computation_hypercube}

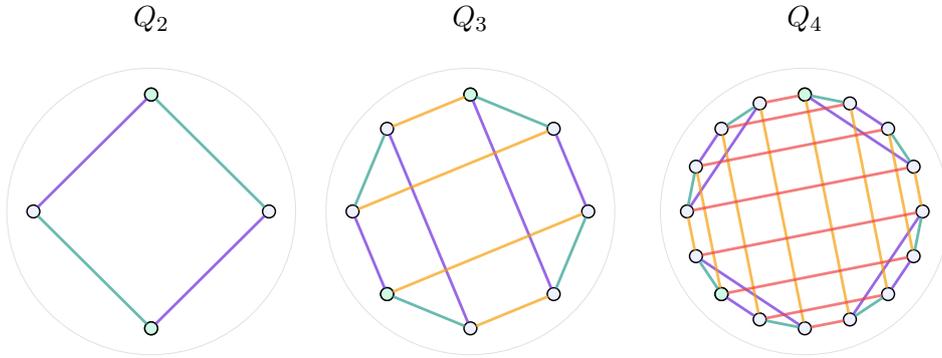
\begin{figure}[H]
\centering
\begin{tikzpicture}[x=1cm,y=1cm, line cap=round, line join=round]

% --- A soft-but-crisp palette (one color per flipped bit) ---
\definecolor{bitA}{HTML}{2A9D8F} % teal
\definecolor{bitB}{HTML}{6D28D9} % purple
\definecolor{bitC}{HTML}{F59E0B} % amber
\definecolor{bitD}{HTML}{EF4444} % red

\definecolor{nodeMid}{HTML}{EEF2FF} % very light indigo
\definecolor{nodeHi}{HTML}{D1FAE5}  % very light green

\tikzset{
  vtx/.style={circle, draw=black, line width=.55pt, fill=nodeMid, inner sep=1.7pt},
  vtxHi/.style={vtx, fill=nodeHi},
  edgeA/.style={line width=1.05pt, draw=bitA, opacity=.70},
  edgeB/.style={line width=1.05pt, draw=bitB, opacity=.70},
  edgeC/.style={line width=1.05pt, draw=bitC, opacity=.70},
  edgeD/.style={line width=1.05pt, draw=bitD, opacity=.70},
  title/.style={font=\bfseries}
}

% ============================================================
% Panel 1: Q2 (4 vertices) — circular chord look
% ============================================================
\begin{scope}[shift={(0,0)}]
  \node[title] at (0,2.55) {$Q_2$};

  \def\R{1.55}
  % Gray order around the circle
  \foreach[count=\i from 0] \lab in {00,01,11,10}{
    \pgfmathsetmacro\ang{90 - 360*\i/4}
    \coordinate (\lab) at ({\R*cos(\ang)},{\R*sin(\ang)});
    \ifx\lab\empty\else\fi
    \node[vtx] (v\lab) at (\lab) {};
  }
  \node[vtxHi] (v00) at (00) {};
  \node[vtxHi] (v11) at (11) {};

  % edges (bit0 = A, bit1 = B)
  \draw[edgeA] (v00)--(v01);
  \draw[edgeA] (v10)--(v11);
  \draw[edgeB] (v00)--(v10);
  \draw[edgeB] (v01)--(v11);

  % subtle frame
  \draw[opacity=.12] (0,0) circle (\R+0.35);
\end{scope}

% ============================================================
% Panel 2: Q3 (8 vertices) — circular chord look
% ============================================================
\begin{scope}[shift={(4.2,0)}]
  \node[title] at (0,2.55) {$Q_3$};

  \def\R{1.55}
  \foreach[count=\i from 0] \lab in {000,001,011,010,110,111,101,100}{
    \pgfmathsetmacro\ang{90 - 360*\i/8}
    \coordinate (\lab) at ({\R*cos(\ang)},{\R*sin(\ang)});
    \node[vtx] (v\lab) at (\lab) {};
  }
  \node[vtxHi] (v000) at (000) {};
  \node[vtxHi] (v111) at (111) {};

  % edges by bit flipped (rightmost bit = A, middle = B, leftmost = C)
  % A-edges:
  \draw[edgeA] (v000)--(v001);
  \draw[edgeA] (v010)--(v011);
  \draw[edgeA] (v100)--(v101);
  \draw[edgeA] (v110)--(v111);

  % B-edges:
  \draw[edgeB] (v000)--(v010);
  \draw[edgeB] (v001)--(v011);
  \draw[edgeB] (v100)--(v110);
  \draw[edgeB] (v101)--(v111);

  % C-edges:
  \draw[edgeC] (v000)--(v100);
  \draw[edgeC] (v001)--(v101);
  \draw[edgeC] (v010)--(v110);
  \draw[edgeC] (v011)--(v111);

  \draw[opacity=.12] (0,0) circle (\R+0.35);
\end{scope}

% ============================================================
% Panel 3: Q4 (16 vertices) — “Wikipedia chord diagram” vibe
% ============================================================
\begin{scope}[shift={(8.6,0)}]
  \node[title] at (0,2.55) {$Q_4$};

  \def\R{1.55}
  % Reflected Gray order (nice around a circle)
  \foreach[count=\i from 0] \lab in {%
    0000,0001,0011,0010,0110,0111,0101,0100,%
    1100,1101,1111,1110,1010,1011,1001,1000%
  }{
    \pgfmathsetmacro\ang{90 - 360*\i/16}
    \coordinate (\lab) at ({\R*cos(\ang)},{\R*sin(\ang)});
    \node[vtx] (v\lab) at (\lab) {};
  }
  \node[vtxHi] (v0000) at (0000) {};
  \node[vtxHi] (v1111) at (1111) {};

  % Bit meaning: b3 b2 b1 b0 (left to right)
  % b0 edges = A
  \draw[edgeA] (v0000)--(v0001);
  \draw[edgeA] (v0010)--(v0011);
  \draw[edgeA] (v0100)--(v0101);
  \draw[edgeA] (v0110)--(v0111);
  \draw[edgeA] (v1000)--(v1001);
  \draw[edgeA] (v1010)--(v1011);
  \draw[edgeA] (v1100)--(v1101);
  \draw[edgeA] (v1110)--(v1111);

  % b1 edges = B
  \draw[edgeB] (v0000)--(v0010);
  \draw[edgeB] (v0001)--(v0011);
  \draw[edgeB] (v0100)--(v0110);
  \draw[edgeB] (v0101)--(v0111);
  \draw[edgeB] (v1000)--(v1010);
  \draw[edgeB] (v1001)--(v1011);
  \draw[edgeB] (v1100)--(v1110);
  \draw[edgeB] (v1101)--(v1111);

  % b2 edges = C
  \draw[edgeC] (v0000)--(v0100);
  \draw[edgeC] (v0001)--(v0101);
  \draw[edgeC] (v0010)--(v0110);
  \draw[edgeC] (v0011)--(v0111);
  \draw[edgeC] (v1000)--(v1100);
  \draw[edgeC] (v1001)--(v1101);
  \draw[edgeC] (v1010)--(v1110);
  \draw[edgeC] (v1011)--(v1111);

  % b3 edges = D
  \draw[edgeD] (v0000)--(v1000);
  \draw[edgeD] (v0001)--(v1001);
  \draw[edgeD] (v0010)--(v1010);
  \draw[edgeD] (v0011)--(v1011);
  \draw[edgeD] (v0100)--(v1100);
  \draw[edgeD] (v0101)--(v1101);
  \draw[edgeD] (v0110)--(v1110);
  \draw[edgeD] (v0111)--(v1111);

  \draw[opacity=.12] (0,0) circle (\R+0.35);

\end{scope}

\end{tikzpicture}
\caption{Hypercube family $Q_d$ for $d=2,3,4$}
\end{figure}
We illustrate the computation of the upper bound $\mathrm{CSBound}(H{=}G)$ on the hypercube family $Q_d$.
Since $Q_d$ is edge-transitive, all edges lie in one automorphism orbit; hence the
ambient effective resistance $\ReffQd(e)$ is the same for every edge
$e\in E(Q_d)$ \cite{anari-oveisgharan2015, klein-randic1993}.

Fix any edge $e_0=\{u,v\}$ and write
\[
R_d \;:=\; \ReffQd(e_0) \;=\; \ReffQd(e)
\  \text{for all } e\in E(Q_d).
\]
Hence for an arbitrary conductance assignment $c:E(Q_d)\to(0,\infty)$,
\[
\sum_{e\in E(Q_d)} \frac{c(e)}{\ReffQd(e)}
\;=\;
\frac{1}{R_d}\sum_{e\in E(Q_d)} c(e).
\]

To compute $\ReffQd(e)$, the script follows from \cite{ghosh2008} and uses a grounded Laplacian solve: relabel vertices as
$\{0,1,\dots,|V|-1\}$ with ground node $0$, then form the reduced (grounded) Laplacian by deleting the row and column of the
Laplacian corresponding to node $0$; in other words, let $L_{\mathrm{gr}}$ denote the principal submatrix of $L$ obtained by deleting the row and column corresponding to the ground vertex $0$. Then for $u,v\in V(G)$, the endpoints of $e\in E(G)$, solve $L_{\mathrm{gr}}x=b$ with $b$ equal to $+1$ at $u$ and $-1$ at $v$
(with the grounded coordinate omitted). The effective resistance is then the resulting voltage drop
$R_{\mathrm{eff}(e)}^{(Q_d)}=|x_u-x_v|$ \cite{ghosh2008}.

\subsection{Random conductance sampling}\label{sec:rand_c_method}

To test robustness under heterogeneous weights, we generate i.i.d.\@ random conductances on the hypercube edges from a two-point law supported on $\{1,\lambda\}$. Fix parameters $(\lambda,p)$ and, independently for each edge $e\in E(Q_d)$, sample
\[
c(e)=
\begin{cases}
\lambda, & \text{with probability } p,\\
1, & \text{with probability } 1-p.
\end{cases}
\]
For each dimension $d\in\{d_{\min},\dots,d_{\max}\}$ we draw \texttt{reps} independent conductance assignments and compute, for each draw, the overlay quantities $D_c(Q_d)$ and $\mathrm{CSBound}(H{=}Q_d)$, as well as the tightness ratio $\mathrm{CSBound}(H{=}Q_d)/D_c(Q_d)$.
We summarize trial-to-trial variation in the overlay plots via shaded bands, and we report per-$d$ median ratios in the tightness figures.

As a representative run, the command
\begin{verbatim}
 --dist two_point --lambda 10 --p 0.5 --d_min 4 --d_max 10 --reps 25
\end{verbatim}
produces median tightness ratios decreasing with $d$, with total runtime on the order of one minute
for the full sweep.
Unless stated otherwise, we use $p=0.5$ and repeat the same experiment for $\lambda\in\{5,10,50\}$.
\subsection{Plots}
\begin{figure}[H]
    \centering
    \includegraphics[width=0.85\linewidth]{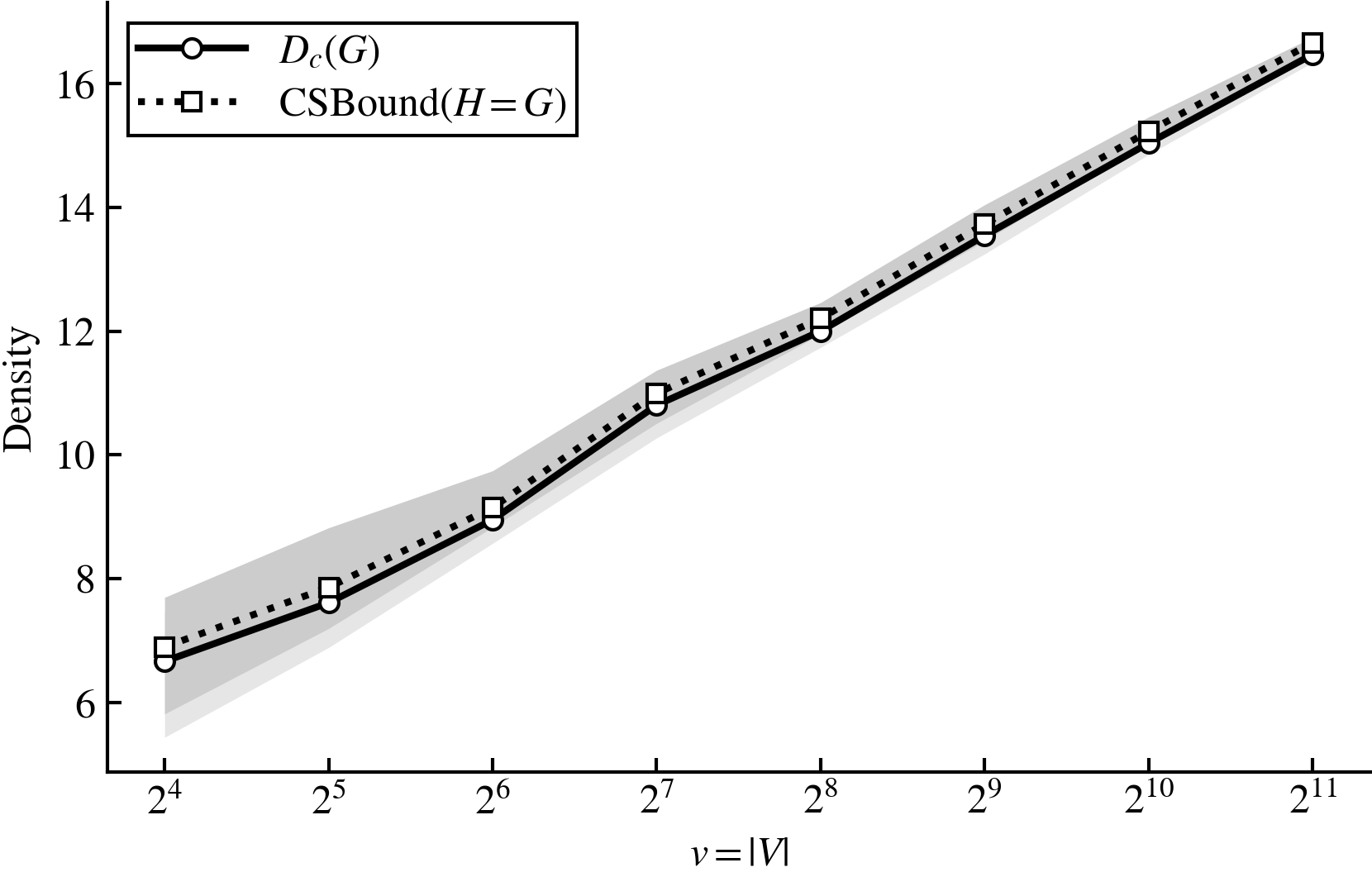}
    \caption{Densities with heterogeneous $c$ and $\lambda\in [1, 5]$}
    \label{fig:compute1}
\end{figure}

\begin{figure}[H]
    \centering
    \includegraphics[width=0.85\linewidth]{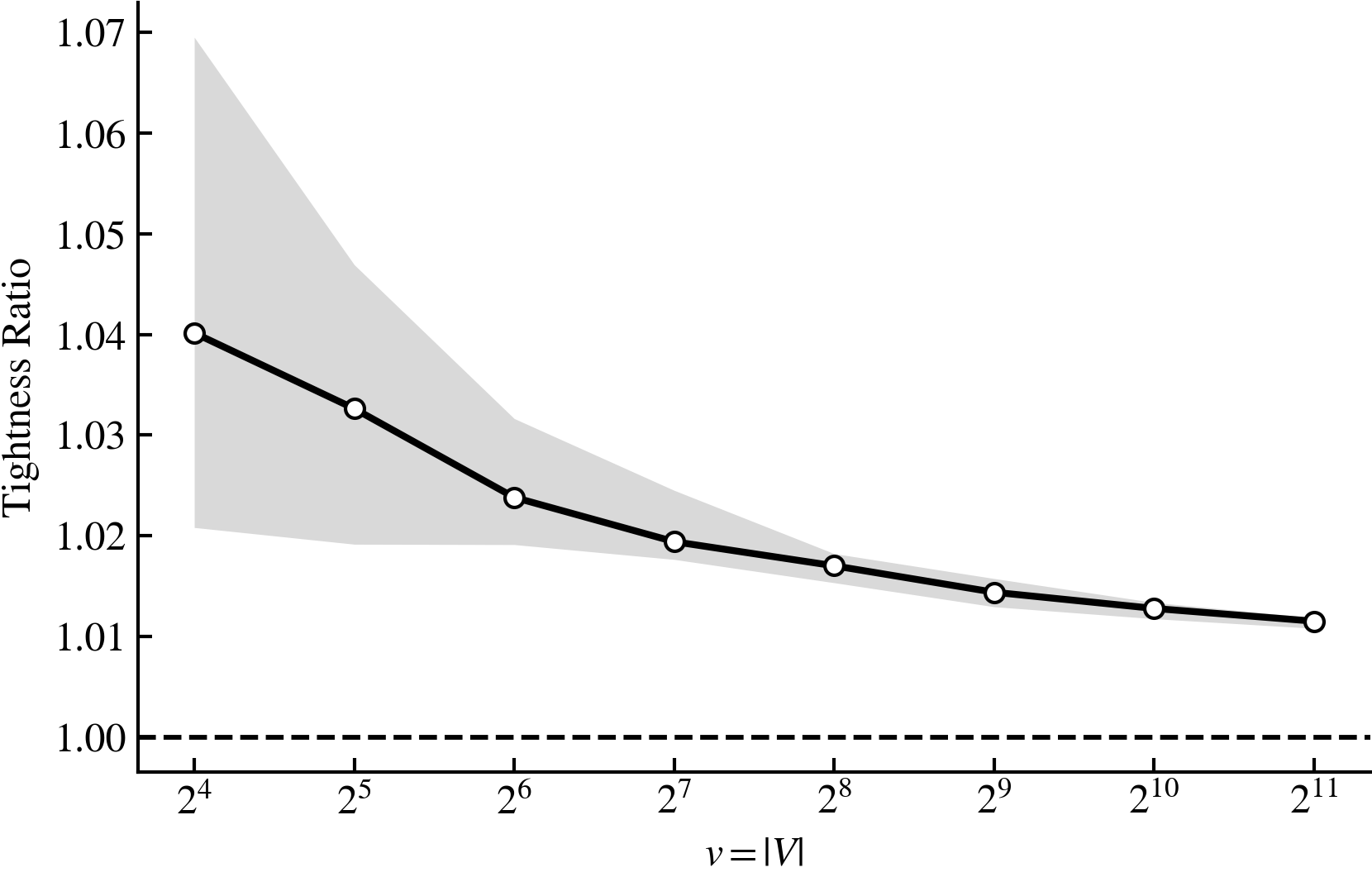}
    \caption{Tightness with heterogeneous $c$ and $\lambda\in [1, 5]$}
    \label{fig:compute1}
\end{figure}
\begin{remark}
    Pleasantly, our properties from Section~\ref{sec:basic} show up in the asymptotic behavior of the $D_c(G)$ functional (prior to maximization over connected subgraphs), such as monotonicity. Additionally, the upper bound is tight at large $v$ (although these are all low-dimensional hypercube families.)
\end{remark}

\begin{figure}[H]
    \centering
    \includegraphics[width=0.85\linewidth]{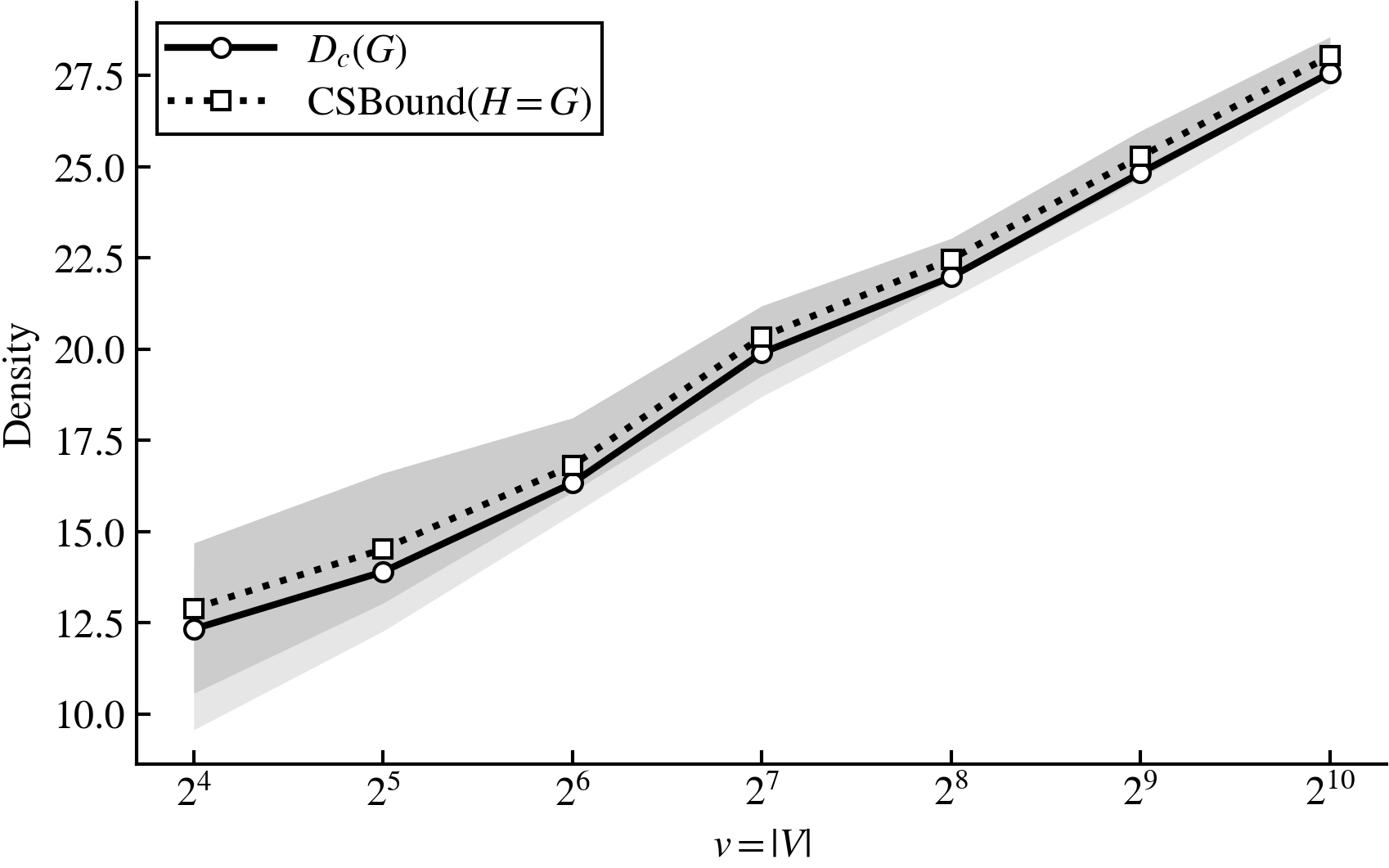}
    \caption{Densities with heterogeneous $c$ and $\lambda\in [1, 10]$}
    \label{fig:placeholder}
\end{figure}

\begin{figure}[H]
    \centering
    \includegraphics[width=0.85\linewidth]{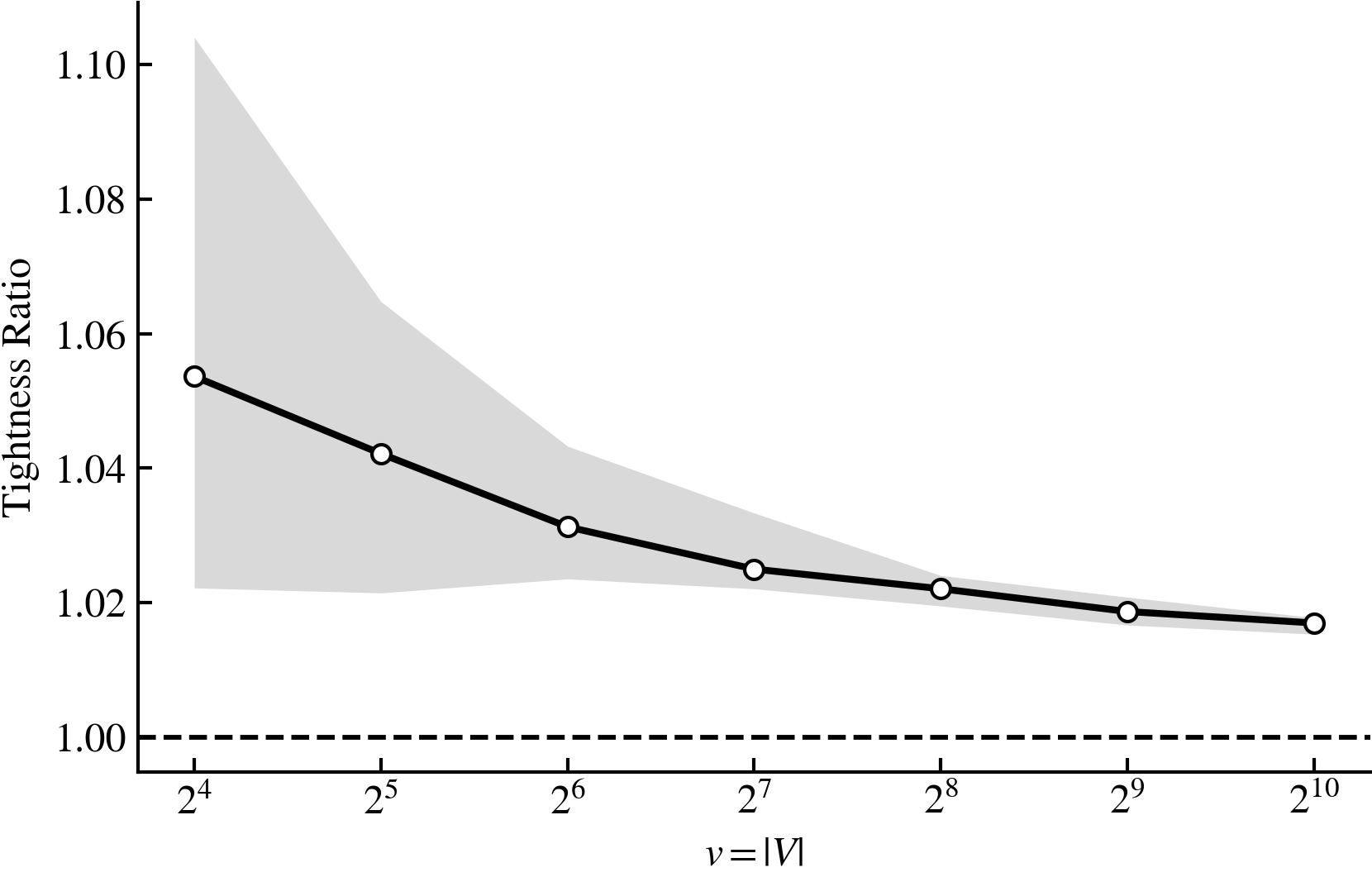}
    \caption{Tightness with heterogeneous $c$ and $\lambda\in [1, 10]$}
    \label{fig:placeholder}
\end{figure}

\begin{figure}[H]
    \centering
    \includegraphics[width=0.85\linewidth]{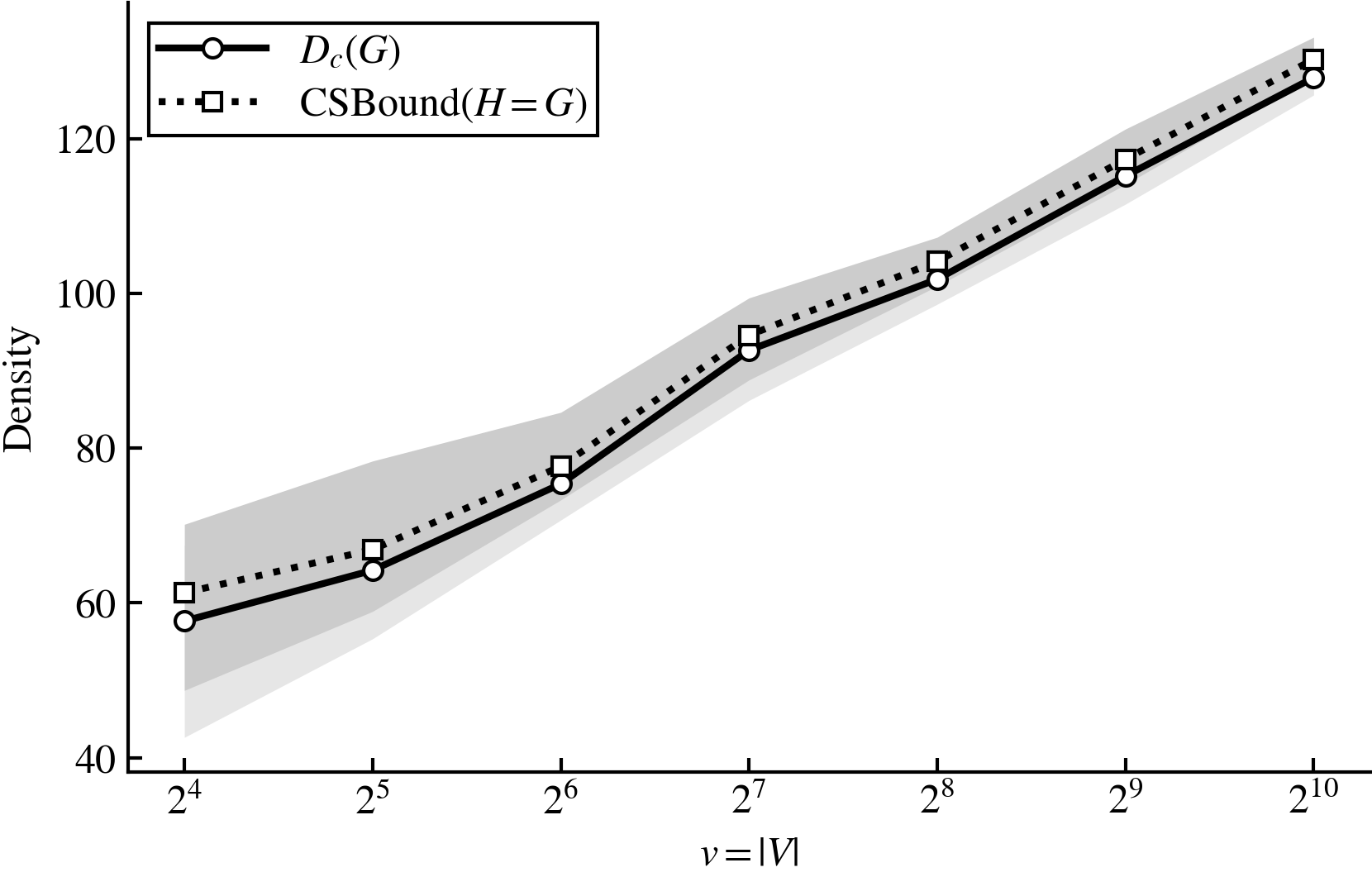}
    \caption{Densities with heterogeneous $c$ and $\lambda\in [1, 50]$}
    \label{fig:compute1}
\end{figure}

\begin{figure}[H]
    \centering
    \includegraphics[width=0.85\linewidth]{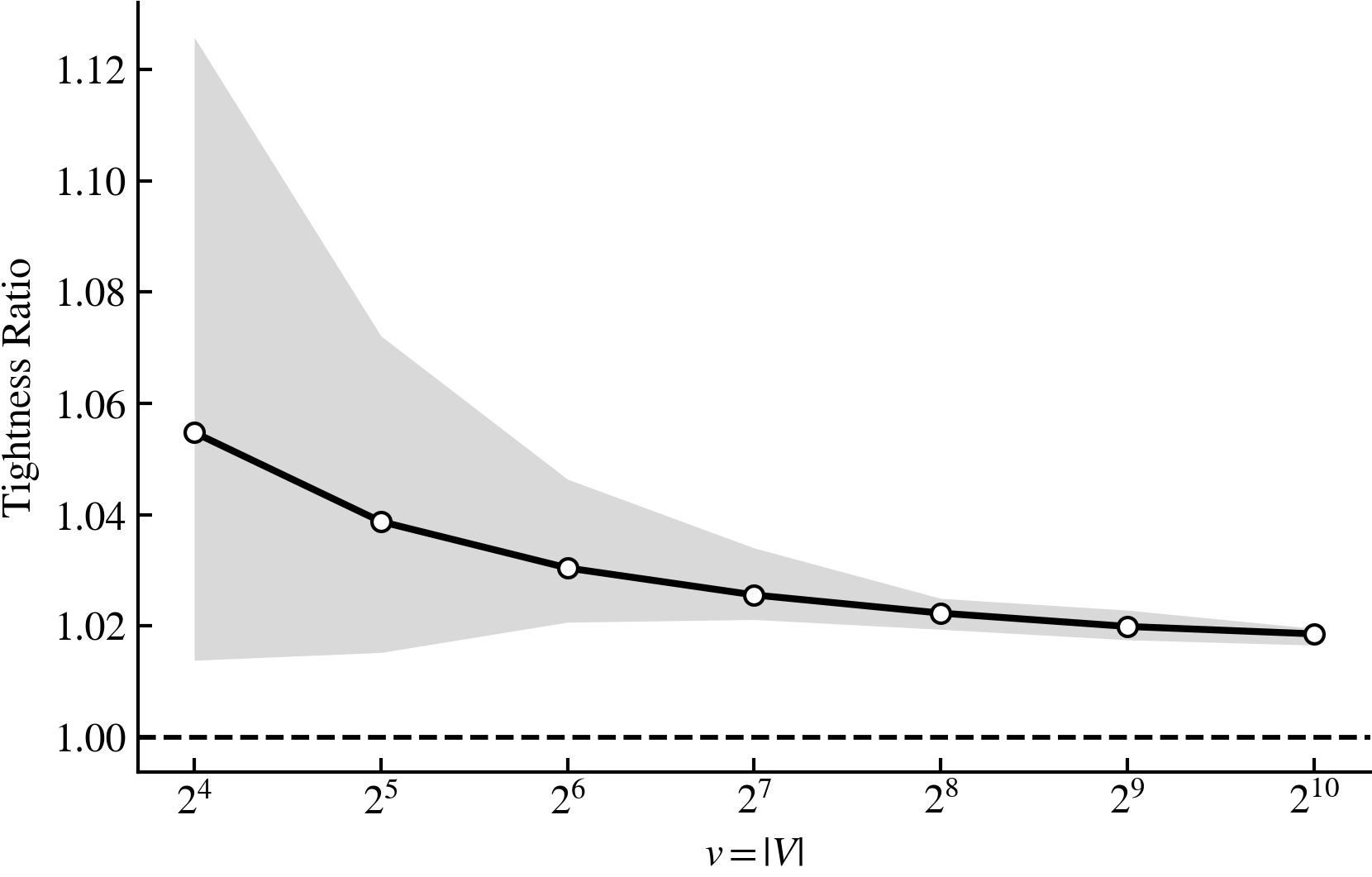}
    \caption{Tightness with heterogeneous $c$ and $\lambda\in [1, 50]$}
    \label{fig:compute1}
\end{figure}
\begin{remark}
    Indeed, the bound is tight at asymptotically moderately large $v$; further research is needed to fully explore the computational complexity of different graph families. 
\end{remark}
In short, the hypercube family provides a clean stress-test where symmetry allows us to reduce the evaluation of the certificate to a single grounded Laplacian solve (up to relabeling), while still allowing for heterogeneous conductance assignments through random sampling. The overlay figures indicate that $\mathrm{CSBound}(H{=}Q_d)$ tracks the density functional $D_c(Q_d)$ closely, and the tightness ratios remain close to $1$ even as $|V(Q_d)|=2^d$ grows (in particular, the tightness ratio never dips below 1 (Proposition~\ref{prop:cs-upper})). These experiments suggest that the certificate is not just an artifact of unit weights, but also behaves stably under significant weight heterogeneity, motivating the broader family-wise computations and complexity discussion in future projects.

\section{Conclusion}\label{sec:conclusion}
This paper introduced and studied a conductance--weighted analogue of arboricity built from the density functional
\[
D_c(H)=\frac{\sum_{e\in E(H)} c(e)}{|V(H)|-1},
\]
together with the associated extremal quantity obtained by maximizing over connected subgraphs.
The guiding theme is that embedding conductances into the unweighted fractional arboricity framework produces an invariant that is
both structurally similar to fractional arboricity and also naturally tethered to effective resistance in electrical network theory.
On the theoretical side, we developed a usable upper bound for the extremal density in terms of effective resistances in the ambient graph, which is explicit once effective resistances are available.
The resulting inequality behaves well under scaling, operates cleanly with classical families, and is computable in practice via standard linear algebra. A brief computational exhibit complements the theory. Using Python, we generated overlay and tightness-ratio plots comparing the baseline density
to the explicit certificate across graph sizes. We emphasized the hypercube family \(Q_d\), where edge--transitivity implies that all edges share the same effective resistance,
so \(\mathrm{CSBound}(H{=}G)\) can be evaluated from a single grounded--Laplacian solve per dimension. Both in the all-ones base case and under random two-point conductances, the certificate is consistently similar in value to the densities, and the tightness ratio trends toward \(1\) as \(d\) grows, supporting the view that the bound is sharp on highly symmetric families.

There are two natural directions for future work. First, we pursue an economics application in which conductances encode intensity or reliability of interactions in a network (such as global trade or social contagion), so that the weighted arboricity functional measures a density that
incorporates both combinatorial connectivity and resistance geometry. Second, on the purely mathematical side, the objects introduced here invite a more systematic analysis and algebraic formalization: finer structural properties of the extremal functional or limiting behavior under graph operations. We expect these two strains, applied network modeling and deeper structural theory, to reinforce each other with computation serving as a useful tool in that pursuit.

% References
%\section*{Acknowledgments}
%The author thanks Dr. Lon Mitchell, colleagues, and mentors at Eastern Michigan University for %helpful comments on earlier drafts. Any errors are the author’s alone.

\end{document}